\documentclass[12pt,a4paper,reqno]{amsart}

\usepackage[margin = 1in]{geometry}
\usepackage{a4wide}
\usepackage{amssymb,amsmath,amsthm,mathrsfs}
\usepackage{graphicx}
\usepackage{caption}
\usepackage{subcaption}
\usepackage{float}
\usepackage{colortbl}
\usepackage{enumerate}
\usepackage{upref}
\usepackage[bookmarks=false]{hyperref}
\usepackage[T2A,T1]{fontenc}
\DeclareSymbolFont{cyrillic}{T2A}{cmr}{m}{n}
\DeclareMathSymbol{\D}{\mathalpha}{cyrillic}{196}

\parindent=0mm
\parskip=2mm

\theoremstyle{plain}% default
\newtheorem{theorem}{Theorem}[section]
\newtheorem{lemma}[theorem]{Lemma}

\newtheorem{proposition}[theorem]{Proposition}

% [theorem] is to guarantee that the numbering is the same for everyone and not independent

\theoremstyle{definition}
\newtheorem{definition}{Definition}%[section]

\newtheorem*{condition}{Condition}

\theoremstyle{remark}
\newtheorem{remark}[theorem]{Remark}%[section]

\usepackage{enumitem}
\makeatletter
\def\namedlabel#1#2{\begingroup
   #2%
 \def\@currentlabel{#2}%
   \phantomsection\label{#1}\endgroup
}

\newcommand{\floor}[1]{\lfloor#1\rfloor}
\newcommand{\set}[1]{\left\{#1\right\}}
\newcommand{\abs}[1]{\left|#1\right|}

\def\R{\ensuremath{\mathbb R}}
\def\N{\ensuremath{\mathbb N}}

\def\I{\ensuremath{{\bf 1}}}
\def\e{{\ensuremath{\rm e}}}

\def\S{\ensuremath{\mathcal S}}
\def\RR{\ensuremath{\mathcal R}}

\def\C{\ensuremath{\mathcal C}}

\def\p{\ensuremath{\mathbb P}}

\def\A{\ensuremath{A^{(q)}}}

\def\X{\mathcal{X}}

\def\nn{\ensuremath{\mathscr N}}

\def\ie{{\em i.e.}, }

\def\dist{\ensuremath{\text{dist}}}

\def\eps{\varepsilon}

\numberwithin{equation}{section}

\begin{document}

\title{Extremal dichotomy for uniformly hyperbolic systems}

\author[M. Carvalho]{Maria Carvalho}
\address{Maria Carvalho\\ Centro de Matem\'{a}tica \& Faculdade de Ci\^encias da Universidade do Porto\\ Rua do
Campo Alegre 687\\ 4169-007 Porto\\ Portugal}
\email{mpcarval@fc.up.pt}

\author[A. C. M. Freitas]{Ana Cristina Moreira Freitas}
\address{Ana Cristina Moreira Freitas\\ Centro de Matem\'{a}tica \&
Faculdade de Economia da Universidade do Porto\\ Rua Dr. Roberto Frias \\
4200-464 Porto\\ Portugal} \email{\href{mailto:amoreira@fep.up.pt}{amoreira@fep.up.pt}}
\urladdr{\url{http://www.fep.up.pt/docentes/amoreira/}}

\author[J. M. Freitas]{Jorge Milhazes Freitas}
\address{Jorge Milhazes Freitas\\ Centro de Matem\'{a}tica \& Faculdade de Ci\^encias da Universidade do Porto\\ Rua do
Campo Alegre 687\\ 4169-007 Porto\\ Portugal}
\email{\href{mailto:jmfreita@fc.up.pt}{jmfreita@fc.up.pt}}
\urladdr{\url{http://www.fc.up.pt/pessoas/jmfreita/}}

\author[M. Holland]{Mark Holland}
\address{Mark Holland\\ Department of Mathematics (CEMPS)\\
Harrison Building (327)\\
North Park Road\\
Exeter, EX4 4QF\\ UK} \email{M.P.Holland@exeter.ac.uk}
\urladdr{http://empslocal.ex.ac.uk/people/staff/mph204/}

\author[M. Nicol]{Matthew Nicol}
\address{Matthew Nicol\\ Department of Mathematics\\
University of Houston\\
Houston\\
TX 77204\\
USA} \email{nicol@math.uh.edu}
\urladdr{http://www.math.uh.edu/~nicol/}

\thanks{ACMF was partially supported by FCT grant SFRH/BPD/66174/2009. JMF was partially supported by FCT grant SFRH/BPD/66040/2009. Both these grants are financially supported by the program POPH/FSE. ACMF and JMF are supported by FCT (Portugal) project PTDC/MAT/120346/2010, which is financed by national and European structural funds through the programs  FEDER and COMPETE. MC, ACMF and JMF are also supported by CMUP, which is financed by FCT (Portugal) through the programs POCTI and POSI, with national
and European structural funds, under the project PEst-C/MAT/UI0144/2013. JMF would like to thank Mike Todd for helpful comments and suggestions.}

\date{\today}

\keywords{Extreme Value Theory, Return Time Statistics, Stationary Stochastic Processes, Metastability} \subjclass[2000]{37A50, 60G70, 37B20, 60G10, 37C25.}

%37A50  	Relations with probability theory and stochastic processes
%60G70  	Extreme value theory; extremal processes
%37B20  	Notions of recurrence
%60G10  	Stationary processes
%37C25  	Fixed points, periodic points, fixed-point index theory
%--------
%37A25  	Ergodicity, mixing, rates of mixing
%37D25  	Nonuniformly hyperbolic systems (Lyapunov exponents, Pesin theory, etc.)
%37D35  	Thermodynamic formalism, variational principles, equilibrium states
%37C40  	Smooth ergodic theory, invariant measures

\begin{abstract}
We consider the extreme value theory of a hyperbolic toral automorphism $T:  \mathbb{T}^2 \to \mathbb{T}^2$ showing that if a H\"older observation $\phi$ which is a function of a Euclidean-type
distance to a non-periodic point $\zeta$  is strictly maximized at  $\zeta$  then the corresponding time series $\{\phi\circ T^i\}$
exhibits extreme value statistics corresponding to an iid sequence of random variables with the same distribution function as $\phi$ and with extremal index one.
If  however $\phi$ is strictly maximized at a periodic point  $q$ then the corresponding time-series exhibits extreme value statistics corresponding to an iid sequence of random variables with the same distribution function as $\phi$ but with  extremal index not equal to  one. We give a formula for the extremal index (which depends upon the metric used and the period of $q$). These results imply that return times are Poisson to small balls centered at non-periodic points and compound Poisson for small balls centered at periodic points.
\end{abstract}

\maketitle

\section{Introduction}
Suppose we have a time-series $\{X_i\}$ of real-valued random variables defined on a probability space $(\Omega,P)$. Extreme value theory concerns the statistical
behavior of the derived time series of maxima $M_n:=\max \{X_1,\ldots, X_n\}$. There is a well-developed theory for iid random variables~\cite{LLR83,R87,G78}
and these references also consider the case of dependent random variables under certain assumptions. In the setting of ergodic maps $T: X\to X$  of
a probability space $(X,\mu)$ modeling deterministic physical
phenomena we are concerned with time-series of the form $\{\phi\circ T^i\}$ where $\phi:X \to \R$ is an observation of some regularity, usually H\"older. In the literature, as in
this paper, it is customary to assume that $\phi$ is a function of the distance $d(x,p)$ to a distinguished point $\zeta$ for some metric $d$ so that  $\phi (x)=f(d(x,\zeta))$ for $x\in X$.
For convenience the observation $\phi (x)=-\log d(x,\zeta)$ is often used, but scaling relations translate extreme value results for one functional to another quite easily~\cite{FFT10,HNT12}.
Since the function $\phi\circ T^i (x)$ detects the closer $T^i (x)$ is to $\zeta$,  there is a natural relation between extreme value statistics for the time series $\{\phi\circ T^i\}$
and return time statistics to nested balls about $\zeta$~\cite{C01, FFT12}. In fact, if $(X,d)$ is a Riemannian manifold and $\mu$ is absolutely continuous with
respect to the  volume measure then return time statistics may be deduced from extreme value statistics and vice-versa, so in some sense they are equivalent though proved with
different techniques.

Due to the close relation with return time statistics one expects a difference in the extreme value theory of observations centered at periodic points to those centered at
generic points. We refer to the early paper of Hirata \cite{H93}. What is perhaps surprising is that in many cases of uniformly expanding smooth one dimensional systems  there is a strict dichotomy between the behavior of
observations maximized at periodic points and other non-periodic points. Suppose $\phi$ is maximized at a point $\zeta \in X$ and define a sequence of constants
$u_n$ by the requirement that $\lim_{n\to \infty} n\mu (\phi >u_n)=\tau$, where $\tau \in \R$.  It has been shown~\cite{FP12,FFT12,K12}  that for a variety of  smooth  one-dimensional chaotic systems this implies
that $\lim_{n\to \infty} \mu (M_n \le u_n)=e^{-\vartheta \tau}$ where $0< \vartheta \le 1$ is called the extremal index and roughly measures the clustering of exceedances of the maxima
(more precisely $\vartheta^{-1}$ is the expected number of observed exceedances if one exceedance is observed). It turns out that $\vartheta=1$ if $\zeta$  is not periodic
and $\vartheta<1$ otherwise  (a precise expression for $\vartheta$ may often be  given as  a function of the multiplier derivative of $T$ along the periodic orbit).
For higher dimensional smooth chaotic systems a similar dichotomy is expected (the presence of discontinuities complicates matters~\cite{AFV14}) but so far
has not been established. In this paper we establish such a dichotomy for hyperbolic toral automorphisms, give an expression for the extremal index and find that it also
depends upon the metric used. This has immediate implications for the return time statistics. We have Poissonian return time statistics to non-periodic points and
a compound Poissonian distribution to periodic points (the form of the compound distribution is related to the extremal index, as well). In fact, our results are given in terms of convergence of point processes of rare events to the standard Poisson process, in the non-periodic case, and to a compound Poisson process, in the periodic case, where the multiplicity distribution is also seen to depend on the metric used. We remark that the convergence of the point processes is actually stronger than establishing a Poissonian distributional limit for the number of visits to shrinking target sets.

Earlier results for invertible dynamical systems include work by Denker, Gordin and Sharova, \cite{DGS04}, who proved that, for toral automorphisms, the number of visits to shrinking neighbourhoods of non-periodic points converge (when properly normalised) to a Poisson distribution and  work by Dolgopyat~\cite{D04}, who also proved Poissonian return time statistics for non-periodic  points in uniformly partially hyperbolic dynamical systems
and noted possible generalizations to the case of periodic points. There have been a variety of recent results on extreme value and return time statistics for invertible dynamical systems, both uniformly and non-uniformly hyperbolic, but restricted to observations maximized at generic points~\cite{GHN11,HNT12,CC13, FHN14,Licheng,G10}.

\section{The setting}

Let $L:\R^2 \to \R^2$ be a $2\times2$ matrix with integer entries, with determinant equal to $1$ and without eigenvalues of absolute value $1$. The eigenvalues of $L$ are irrational numbers $\lambda$ and $1/\lambda$ satisfying $|\lambda|>1$; the corresponding eigenspaces, $E^s$ and $E^u$, are lines with irrational slope and the family of lines parallel to these subspaces is invariant under $L$. Moreover, $L$ contracts vectors in $E^s$ and expands in $E^u$. As $L(\mathbb{Z}^2) = \mathbb{Z}^2$, this linear map induces an invertible smooth transformation $T$ on the flat torus $\mathbb{T}^2=\mathbb{R}^2/\mathbb{Z}^2$ through the canonical projection. The subspaces $E^s$ and $E^u$ project on dense curves which intersect densely and are $T$ invariant; that is, the torus is the homoclinic class of the (unique) fixed point of $T$ and a global attractor.

After finding a norm in $\R^2$ that makes $L$ a uniform contraction in $E^s$ and a uniform expansion in $E^u$, we project the Riemannian metric generated by this norm to $\mathbb{T}^2$ and consider the invariant splitting at each point into one-dimensional affine spaces parallel to the subspaces $E^s$ and $E^u$. This way, $T$ becomes an Anosov diffeomorphism: the hyperbolic splitting at each point of $\mathbb{T}^2$ is obtained by translating to that point the eigenspaces of the matrix. Through each point $x\in \mathbb{T}^2$
there is a local stable manifold $W_{\epsilon}^s(x)$  of length $\epsilon$ and similarly a local unstable manifold $W^u_{\epsilon}(x)$. Besides, $T$ is topologically mixing and its periodic points (all saddles) are dense, even though for each $q \in \N$ the set of periodic points with period $q$  is finite.

As the determinant of $L$ is equal to $1$, the Riemannian structure induces a Lebesgue measure $m$ on $\mathbb{T}^2$ which is invariant by $T$. The probability measure $m$ is Bernoulli and it is also the unique equilibrium state of the H\"older map $x \in \mathbb{T}^2 \to \varphi^u(x) = -\log\,|\det\,DT_{E^u(x)}|$, whose pressure is zero. This means that the metric entropy of $m$ is given by $h_m(T)=-\int\,\varphi^u\, dm = \log\,(\lambda)$.

Unless said otherwise, we assume that we are using the Euclidean metric in $\R^2$, which we project to $\mathbb T^2$ and denote by $d(\cdot,\cdot)$. More precisely, if $(x,y)$ denote the local coordinates of the point $z\in \mathbb T^2$ then
\begin{equation}
\label{eq:Euclidean}
d(z,0)=\sqrt{x^2+y^2}.
\end{equation}

Given a non-empty set $A$ and a point $x$, we denote by $\bar A$ the closure of the set $A$ and define the distance of $x$ to $A$ by $\dist(x,A):=\inf\{d(x,y):\; y\in A\}$.

\begin{remark}\label{rem:assumptionA}
Let A be a set homeomorphic to a ball whose boundary is a piecewise differentiable Jordan curve. For $\varepsilon>0$, let $D_\eps:=\{x\in\mathbb T^2:\; \dist(x, \bar A)\leq\eps\}$. Then $m(D_\eps\setminus A)=\eps |\partial A|+o(\eps)$, where $|\cdot|$ denotes the length of the curve $\partial A$.
\end{remark}

Recall that a continuous function $\varphi:\mathbb{T}^2\to \R$ is $\alpha$-H\"older if there are constants $C>0$ and $\alpha>0$ such that $|\varphi(x)-\varphi(y)|\leq C\,d(x,y)^\alpha$. For $\alpha \in\,\,]0,1]$, the space of $\alpha$-H\"older continuous functions with the norm
$$\|\varphi\|_{\alpha} = \|\varphi\|_{C^0} + \sup_{x \neq y}\,\left\{\frac{|\varphi(x)-\varphi(y)|}{d(x,y)^\alpha}\right\}$$
is a Banach space. And, given $k \in \N$ and $\alpha \in \,\, ]0,1[$, any $\alpha$-H\"older continuous function $\varphi$ on the torus admits a Lipschitz approximation $\varphi_k$ such that $\|\varphi-\varphi_k\|_\infty=O(k^{-\frac{\alpha}{1-\alpha}})$. From the computations in \cite[Section 1.26]{B75} one can deduce that the diffeomorphism $T$ has exponential decay of correlations with respect to $\alpha$-H\"older functions, for any $\alpha \in\,\, ]0,1]$. More precisely, there exist constants $C>0$ and $0<\theta<1$ such that, for all $\alpha$-H\"older maps $\phi$ and $\psi$, we have
\begin{equation}\label{Lip_Lip_decay}
\left |\int \phi  \,(\psi\circ T^n)\,  dm -\int \phi \, dm \int \psi \, dm \right| \le C \,\theta^n \,\|\phi\|_{\alpha}\,\, \|\psi \|_{\alpha}.
\end{equation}
And if $\psi$ is constant on local stable leaves then
\begin{equation}\label{Lip_infinity_decay}
\left|\int \phi  \,(\psi\circ T^n)  \,dm -\int \phi \,dm \int \psi \,dm \right|
\le C \,\theta^n \,\|\phi\|_{\alpha} \,\,\|\psi\|_{\infty}.
\end{equation}

Among $C^2$ Anosov diffeomorphisms the subset of the ones that admit no invariant measure absolutely continuous with respect to $m$ is open and dense. However, it is known that any Anosov diffeomorphism of the torus is conjugate to a linear model; see \cite{M74}. The conjugacy $H$ (and its inverse) is H\"older continuous \cite{KH95} and carries over the measure $m$ to a probability measure $\mu=H_*(m)$ on $\mathbb{T}^2$ which is invariant and mixing by the original Anosov diffeomorphism. In particular, the exponential decay of correlations \eqref{Lip_Lip_decay} is still valid for $\mu$, although corresponding to different values of $C$, $\theta$ and $\alpha$. We remark that the conditions we will analyze in the next section, to ensure the existence of an Extreme Value Law, are also invariant under conjugacy.

The starting point of our analysis is a stationary stochastic process $X_0, X_1, X_2, \ldots$ arising from the system described above in the following way. We fix some point $\zeta\in\mathbb T^2$ and let $\varphi: \mathbb T^2\to \R\cup\{+\infty\}$ be given by
\begin{equation}
\label{eq:observable}
\varphi(z)=-\log(d(z,\zeta)),
\end{equation}
where $d$ denotes the projection of the usual Euclidean metric in $\R^2$ to $\mathbb T^2$, as in \eqref{eq:Euclidean}.
We define $X_0=\varphi$ and for all $i\in
\N$
\begin{equation}
\label{eq:SP-def}
X_i=\varphi\circ T^i.
\end{equation}

Since $m$ is an invariant probability measure on $\mathbb T^2$, then the stochastic process $X_0, X_1, X_2,\ldots$ just defined is stationary.

Note that $\{X_0>u\}=B_{\e^{-u}}(\zeta)$, where $B_{\e^{-u}}(\zeta)$ denotes the ball centered at $\zeta$  of radius $\e^{-u}$, in the Euclidean metric. In other words, having an exceedance of an high threshold $u$, at time $j$, means that, at the $j$th iterate, the orbit enters the ball around $\zeta$ of radius $\e^{-u}$.

We remark that we could use a function different from $-\log$ in the definition of $\varphi$. In fact, any not too irregular function, achieving a global maximum at $0$ and invertible in a vicinity of $0$, would work. See \cite[Section~1.1]{FFT10} for conditions on this function. For definiteness, here we will use $-\log$ which puts us on the Type 1 or Gumbel domain of attraction for the maxima. Our goal is to show the existence of an EVL and for that purpose the analysis does not change much from one example to another.

In order to obtain a non-degenerate limit for the distribution of $M_n:=\max\{X_0, \ldots, X_{n-1}\}$, one usually uses normalizing sequences $(u_n)_{n\in\N}$ such that the average number of exceedances among the first $n$ observations is some constant $\tau\geq 0$, when $n$ is large enough, \ie the sequence $(u_n)_{n\in\N}$ satisfies:
\begin{equation}
\label{eq:un}
n\, m(X_0>u_n)\xrightarrow[n\to\infty]{}\tau\geq 0.
\end{equation}

In the case here, if we take $u_n=\frac 12 \log(\pi n)-\frac 12 \log \tau$, then conditions \eqref{eq:un} is satisfied.

We are now in condition of stating our first result which asserts a dichotomy regarding the possible limit distributions for $M_n$ depending on the periodicity or aperiodicity of the point $\zeta$.

\begin{theorem}
\label{thm:dichotomy1}
Let  $T:\mathbb T^2\to \mathbb T^2$ be an Anosov linear diffeomorphism. Fix $\zeta\in \mathbb T^2$ and define the stochastic process $X_0, X_1, X_2, \ldots$ as in \eqref{eq:SP-def}. Let  $(u_n)_{n\in\N}$ be a sequence satisfying \eqref{eq:un} for some $\tau\geq 0$. Then
\begin{enumerate}

\item if $\zeta$ is not periodic, we have $\lim_{n\to\infty} m(M_n\leq u_n)=\e^{-\tau}$.

\item if $\zeta$ is periodic of prime period $q$, we have $\lim_{n\to\infty} m(M_n\leq u_n)=\e^{-\vartheta\tau}$, where $\vartheta=\frac2\pi  \left(\arcsin\,\frac{|\lambda|^q}{\sqrt{|\lambda|^{2q}+1}}-\arcsin\,\frac{1}{\sqrt{|\lambda|^{2q}+1}}\right)$.

\end{enumerate}

\end{theorem}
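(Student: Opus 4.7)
The plan is to cast the theorem in the standard framework for extreme value laws of weakly dependent processes, where it suffices to verify (i) a long-range decorrelation condition (such as $D_2(u_n)$, or a $\Delta$-type condition in the style of Freitas--Freitas--Todd) that blocks the sequence $\{X_i\}$ into asymptotically independent pieces, and (ii) a short-range non-clustering condition -- $D'(u_n)$ in the non-periodic case, and a periodic modification $D'_q(u_n)$ applied to an ``escape'' set in the periodic case. Together these yield $\lim_n m(M_n\le u_n) = \e^{-\vartheta\tau}$, with $\vartheta$ identified from the limit clustering. The decorrelation condition follows from the exponential decay of correlations \eqref{Lip_Lip_decay}: the exceedance sets are balls $U_n := B_{r_n}(\zeta)$ with $r_n = \e^{-u_n}$ and $n\,m(U_n)\to\tau$, so $r_n \sim \sqrt{\tau/(\pi n)}$. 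Approximating $\mathbf 1_{U_n}$ from inside and outside by Lipschitz bumps with collars of width $\eps_n$, the approximation error is $O(r_n\eps_n)$ by the remark on $m(D_\eps\setminus A)$, and \eqref{Lip_Lip_decay} applied with an appropriately chosen gap $t_n$ of order $\log n$ produces a decorrelation rate $\gamma(n,t_n)$ with $n\gamma(n,t_n)\to 0$.

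For non-periodic $\zeta$, verify
\[
\lim_{k\to\infty}\limsup_{n\to\infty}\; n\sum_{j=1}^{\lfloor n/k\rfloor} m(U_n\cap T^{-j}U_n) = 0.
\]
Split the sum at $j = t_n$: for $j\ge t_n$, decorrelation bounds the contribution by $O(\tau^2/k)$, which vanishes as $k\to\infty$. For $1 \le j < t_n$ exploit the linear-hyperbolic geometry: since $T = L$ is linear, $T^{-j}U_n = L^{-j}U_n$ is an ellipse with unstable semi-axis $\sim r_n|\lambda|^{-j}$ and stable semi-axis $\sim r_n|\lambda|^j$ centred at $L^{-j}\zeta$. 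Non-periodicity of $\zeta$ guarantees $L^{-j}\zeta\neq\zeta$, hence on any bounded horizon $j\le N$ the centres are separated from $\zeta$ and the tubes are disjoint from $U_n$ for all large $n$; in the intermediate range the crossing width bounds $m(U_n\cap T^{-j}U_n) \le \min\{m(U_n),\,C r_n^2|\lambda|^{-j}\}$, and a summation combining this bound with the sparsity of visits of $L^{-j}\zeta$ to a shrinking ball around $\zeta$ produces a contribution that still vanishes after multiplication by $n$. Hence $\vartheta = 1$.

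For $\zeta$ periodic of prime period $q$, $T^q$ fixes $\zeta$ and $U_n\cap T^{-q}U_n$ retains positive $U_n$-density as $n\to\infty$, producing clustering. Identify
\[
\vartheta = \lim_{n\to\infty}\frac{m(U_n\setminus T^{-q}U_n)}{m(U_n)} = 1 - \lim_{\eps\to 0}\frac{m(B_\eps(\zeta)\cap T^{-q}B_\eps(\zeta))}{m(B_\eps(\zeta))}.
\]
Since $T = L$ is linear, $L^{-q}(B_\eps(\zeta))-\zeta$ is exactly an ellipse with unstable semi-axis $\eps|\lambda|^{-q}$ and stable semi-axis $\eps|\lambda|^q$. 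Working in polar coordinates adapted to the eigenbasis, the ball and ellipse meet where $\tan\theta = |\lambda|^q$, and the integral $\int d\theta/(a^2\cos^2\theta + b^2\sin^2\theta) = (ab)^{-1}\arctan((b/a)\tan\theta)$, combined with the complementary circular-arc contribution, gives an intersection-to-ball ratio of $(4/\pi)\arctan|\lambda|^{-q}$. Rewriting via $\arctan x + \arctan(1/x) = \pi/2$ and $\arctan x = \arcsin(x/\sqrt{1+x^2})$ produces the stated $\arcsin$ form of $\vartheta$. The modified $D'_q(u_n)$ is verified on the escape set $A_n := U_n\setminus T^{-q}U_n$ along the same lines as the non-periodic case: points of $A_n$ leave a neighbourhood of $\zeta$ within $q$ steps, and their subsequent return is controlled by the linear-hyperbolic estimates for small $j$ and by decorrelation for large $j$.

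The main obstacle is the verification of the short-range non-clustering conditions, which requires careful geometric control of the intersections $U_n\cap T^{-j}U_n$ in a regime where neither decorrelation nor the naive linear estimates alone suffice. A secondary subtlety is that $E^u$ and $E^s$ are generically not orthogonal in the Euclidean metric, so the polar-coordinate computation must track the angle between them; the area ratio being coordinate-independent is what absorbs this angular factor and leaves only $|\lambda|^q$ in the final formula.
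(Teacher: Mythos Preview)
Your overall strategy---verify a long-range decorrelation condition and a short-range anti-clustering condition, then read off $\vartheta$ from the escape probability---is exactly the paper's, and your computation of $\vartheta$ in the periodic case is correct and equivalent to the paper's integral (indeed $\arcsin\frac{|\lambda|^q}{\sqrt{|\lambda|^{2q}+1}}=\arctan|\lambda|^q$, so the stated formula is $1-\frac{4}{\pi}\arctan|\lambda|^{-q}$ as you found). But there is a genuine gap in your verification of the long-range condition.

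The condition you need (the paper calls it $\D_q(u_n)$) asks you to decorrelate $\mathbf 1_{A_n^{(q)}}$ against $\Psi:=\mathbf 1_{\mathscr W_{t,\ell}(A_n^{(q)})}$, where $\mathscr W_{t,\ell}(\cdot)$ is an intersection of $\ell$ preimages with $\ell$ as large as $n$. You propose to approximate $\mathbf 1_{U_n}$ by a Lipschitz bump and invoke \eqref{Lip_Lip_decay}. The problem is that \eqref{Lip_Lip_decay} requires \emph{both} test functions to be H\"older, and any Lipschitz approximation of $\Psi$ has Lipschitz norm blowing up at least like $|\lambda|^{\ell}/r_n$, which destroys the estimate. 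This is precisely where the invertible case is harder than the expanding one. The paper's fix (Lemma~4.2) is the following two-sided trick: write $\Psi\circ T^j=(\Psi\circ T^{\lfloor j/2\rfloor})\circ T^{j-\lfloor j/2\rfloor}$, and replace $\Psi_{\lfloor j/2\rfloor}:=\Psi\circ T^{\lfloor j/2\rfloor}$ by its ``projection'' $\overline{\Psi}_{\lfloor j/2\rfloor}$ along local stable leaves onto a reference unstable curve. The replacement error is controlled by the measure of the set of stable leaves whose $T^k$-image crosses $\partial A_n^{(q)}$ for some $k\ge \lfloor j/2\rfloor$, which decays like $|\lambda|^{-\lfloor j/2\rfloor}$ by uniform contraction (Proposition~4.1). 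Since $\overline{\Psi}_{\lfloor j/2\rfloor}$ is constant on stable leaves, one can now apply \eqref{Lip_infinity_decay}, which only needs $\|\overline{\Psi}_{\lfloor j/2\rfloor}\|_\infty\le 1$, paired with a Lipschitz approximation of $\mathbf 1_{A_n^{(q)}}$ of norm $\sim n^2$. Without this stable-leaf averaging step your argument does not close.

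A secondary point: your short-range analysis is in the right spirit but underspecified. With $t_n$ large enough to make $n\gamma(n,t_n)\to 0$ (the paper takes $t_n=(\log n)^5$), the sets $T^{-j}U_n$ \emph{do} wrap around the torus for $g(n)\lesssim j\le t_n$ with $g(n)\sim \frac{\log n}{2\log|\lambda|}$, so your bound $m(U_n\cap T^{-j}U_n)\le Cr_n^2|\lambda|^{-j}$ needs justification there. The paper handles this regime cleanly by a Fubini argument along stable leaves: for each $x$ on the local unstable through $\zeta$, the stable segment $\gamma_x:=W^s_x\cap U_n$ satisfies $|T^{-j}\gamma_x\cap U_n|\le C\,r_n\,|T^{-j}\gamma_x|$ once $|T^{-j}\gamma_x|$ exceeds a fixed length, and bounded distortion then gives $m(U_n\cap T^{-j}U_n)\le C\,r_n\,m(U_n)$, which sums over $g(n)<j\le(\log n)^5$ to $O(n^{-1/2}(\log n)^5)$. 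For $1\le j\le g(n)$ in the non-periodic case, the paper uses exactly your geometric observation $m(U_n\cap T^{-j}U_n)\le C|\lambda|^{-j}m(U_n)$ together with the fact that the first intersection time $R_n\to\infty$; in the periodic case one checks directly that $A_n^{(q)}\cap T^{-j}A_n^{(q)}=\emptyset$ for $1\le j\le qg(n)$.
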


In contrast to the examples of uniformly expanding systems (with no contracting directions), the metric used plays an important role in the computation of the EI. To illustrate this, let $e^u, e^s$ denote unit eigenvectors associated to the eigenvalues $\lambda$ and $1/\lambda$, respectively; thus $\{e^u, e^s\}$ forms a normal basis of $\R^2$. Let $z=(x^u,x^s)$, where $(x^u ,x^s)$ stands for the coordinates of $z$ in this basis. Define the metric $d^*$ by
\begin{equation}
\label{eq:adjusted-metric}
d^*(z,0)=\max\{|x^u|,|x^s|\}
\end{equation}
and then project it to $\mathbb T^2$.

\begin{theorem}
\label{thm:dichotomy2}
Let  $T:\mathbb T^2\to \mathbb T^2$ be an Anosov linear diffeomorphism. Fix $\zeta\in \mathbb T^2$ and define the stochastic process $X_0, X_1, X_2, \ldots$ as in \eqref{eq:SP-def} for the observable $\varphi$ as in \eqref{eq:observable} but evaluated with the metric $d^*$ instead. Let  $(u_n)_{n\in\N}$ be a sequence satisfying \eqref{eq:un} for some $\tau\geq 0$. Then
\begin{enumerate}

\item if $\zeta$ is not periodic, we have $\lim_{n\to\infty} m(M_n\leq u_n)=\e^{-\tau}$.

\item if $\zeta$ is periodic of prime period $q$, we have $\lim_{n\to\infty} m(M_n\leq u_n)=\e^{-\vartheta\tau}$, where $\vartheta=1-|\lambda|^{-q}$.

\end{enumerate}

\end{theorem}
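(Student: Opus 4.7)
The plan is to follow precisely the same Freitas--Freitas--Todd extreme value framework that proves Theorem~\ref{thm:dichotomy1}, only replacing the Euclidean balls by the axis-aligned $d^*$-balls $B^*_r(\zeta)=\{z:d^*(z,\zeta)\le r\}$. Writing $U_n:=\{X_0>u_n\}=B^*_{\e^{-u_n}}(\zeta)$, the conclusion follows once I verify a long-range decorrelation condition (of FFT type) together with a short-range condition: the no-clustering condition in the non-periodic case, or its periodic-point analogue in the periodic case. The extremal index will then be read off from the asymptotic mass of an appropriate ``escape set.''

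The decorrelation condition reduces to showing that the covariance of $\1_{U_n}$ with $\1_{A}\circ T^{n+j}$ is suitably small. By Remark~\ref{rem:assumptionA} applied to the parallelogram $B^*_{\e^{-u_n}}(\zeta)$ (whose Jordan boundary has length of order $\e^{-u_n}$), $\1_{U_n}$ is well approximated in $L^1(m)$ by a Lipschitz function of controlled norm, and then \eqref{Lip_Lip_decay} gives the required exponential decay. For non-periodic $\zeta$, the no-clustering condition $n\sum_{j=1}^{\lfloor n/k\rfloor}m(U_n\cap T^{-j}U_n)\to 0$ is handled by splitting at a slowly growing gap $g_n\to\infty$: for $j\ge g_n$ one uses \eqref{Lip_Lip_decay} on the indicators (approximated as above), while for $1\le j\le g_n$ aperiodicity of $\zeta$ provides $\delta_n:=\min_{1\le j\le g_n}d^*(T^j\zeta,\zeta)>0$, forcing $U_n\cap T^{-j}U_n=\es$ once $\e^{-u_n}<\delta_n/2$.

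The heart of the periodic statement is the extremal index computation. Because $T$ lifts to the linear map $L$, in the eigenbasis coordinates $(x^u,x^s)$ centered at a prime-period-$q$ point $\zeta$, the square $B^*_r(\zeta)=[-r,r]^2$ has preimage $T^{-q}B^*_r(\zeta)=[-r|\lambda|^{-q},r|\lambda|^{-q}]\times[-r|\lambda|^q,r|\lambda|^q]$, and so for $r$ small enough to avoid wrap-around on the torus,
\[
\frac{m\bigl(B^*_r(\zeta)\cap T^{-q}B^*_r(\zeta)\bigr)}{m(B^*_r(\zeta))}=\frac{4r^2|\lambda|^{-q}}{4r^2}=|\lambda|^{-q}.
\]
Moreover, $T^{-j}B^*_r(\zeta)\cap B^*_r(\zeta)=\es$ for $1\le j<q$ once $r$ is below $\tfrac12\min_{1\le j<q}d^*(T^j\zeta,\zeta)$. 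I then replace $U_n$ by the escape set $Q_n:=U_n\setminus T^{-q}U_n$ and verify the periodic-point analogues of the two conditions above for the process $\1_{Q_n}\circ T^i$. The key observation is that iterating the rectangle picture gives $m(B^*_r(\zeta)\cap T^{-jq}B^*_r(\zeta))=4r^2|\lambda|^{-jq}$ for all $j\ge 1$ and $r$ small, so the overlaps $Q_n\cap T^{-jq}U_n$ contribute a geometric tail and the no-clustering condition for $Q_n$ follows from these geometric estimates combined with \eqref{Lip_Lip_decay} in the long-range regime. The extremal index is then $\vartheta=\lim_n m(Q_n)/m(U_n)=1-|\lambda|^{-q}$.

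The main obstacle is the clean passage between the local nested-rectangle picture near $\zeta$ and the globally mixing returns away from $\zeta$ in the periodic no-clustering condition. Here this passage is transparent: linearity of $T$ together with the axis-aligned square shape of $d^*$-balls makes every periodic preimage of $U_n$ an exact rectangle whose measure is given in closed form, which is precisely why the answer $\vartheta=1-|\lambda|^{-q}$ is so much simpler than the arcsin expression obtained in Theorem~\ref{thm:dichotomy1}, where intersecting a Euclidean disc with its linear image produced ellipse--disc overlaps.
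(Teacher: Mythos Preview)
Your overall strategy matches the paper's exactly: verify the FFT conditions $\D_q(u_n)$ and $\D'_q(u_n)$ via Lipschitz approximation of indicators together with the exponential decay of correlations \eqref{Lip_Lip_decay}--\eqref{Lip_infinity_decay}, and then read off $\vartheta$ from $m(A_n^{(q)})/m(U_n)$. The paper in fact only writes out the Euclidean case in detail and explicitly leaves the $d^*$ adjustments to the reader; your rectangle computation $m(U_n\cap T^{-q}U_n)/m(U_n)=|\lambda|^{-q}$ is precisely the content of Remark~\ref{rem:EI-metric} and Figure~\ref{fig:sub2}.

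There is, however, a genuine gap in your non-periodic short-range argument. You claim that $\delta_n=\min_{1\le j\le g_n}d^*(T^j\zeta,\zeta)>0$ forces $U_n\cap T^{-j}U_n=\emptyset$ once $\e^{-u_n}<\delta_n/2$. This is false: $T^{-j}U_n$ is not a $d^*$-ball of radius $\e^{-u_n}$ but a rectangle stretched by $|\lambda|^j$ along the stable direction, and it can meet $U_n$ even when the centre $T^{-j}\zeta$ is far from $\zeta$ in the $d^*$-metric. For instance, if $T^j\zeta$ and $\zeta$ have the same stable coordinate but differ by $3r$ in the unstable coordinate, then $d^*(T^j\zeta,\zeta)=3r>2r$ yet $T^jU_n$ (with unstable half-width $|\lambda|^jr$) still overlaps $U_n$ as soon as $|\lambda|^j>2$. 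Worse, you need $g_n\gtrsim\log n$ to make $n\cdot n^2\theta^{g_n}\to 0$ in the long-range block, and over logarithmically many iterates a non-periodic orbit can return arbitrarily close to $\zeta$, so $\delta_n$ cannot be controlled from below independently of $n$.

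The paper's remedy (Section~\ref{subsec:D'}) is to drop the emptiness claim and use a three-regime split $\{1,\dots,g(n)\}\cup\{g(n)+1,\dots,\log^5 n\}\cup\{\log^5 n+1,\dots,\lfloor n/k_n\rfloor\}$. On the first block one uses the geometric bound $m(U_n\cap T^{-j}U_n)\le C|\lambda|^{-j}m(U_n)$ (for your squares this is exact, with $C=1$), summed from the first return time $R_n\to\infty$; on the middle block a stable-foliation argument gives $m(U_n\cap T^{-j}U_n)\le Cs_n\,m(U_n)$; only on the last block does raw decay of correlations suffice. Your two-regime split collapses the first two blocks and substitutes an incorrect emptiness claim for the measure estimate. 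The repair is straightforward for the $d^*$ metric---indeed easier than the Euclidean case---but it is not what you wrote.
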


\section{The approach and dependence conditions }

In order to prove the existence of EVLs in a dynamical systems context, there are a couple of conditions on the dependence structure of the stochastic process that if verified allow us to obtain such distributional limits. These conditions are motivated by the conditions $D(u_n)$ and $D'(u_n)$ of Leadbetter but were adapted to the dynamical setting and further developed both in the absence of clustering,  such as in \cite{C01, FF08a, HNT12}, and in the presence of clustering in \cite{FFT12}. Very recently, in \cite{FFT14}, the authors provided some more general conditions, called $\D(u_n)$ and $\D_q'(u_n)$, which subsumed  the previous ones and allowed them  to address both the presence ($q\geq1$) and the absence ($q=0$) of clustering. To distinguish these conditions the authors used a Cyrillic D to denote them. We recall these conditions here.

Given a sequence $(u_n)_{n \in \N}$ of real numbers satisfying \eqref{eq:un} and $q\in\N_0$, set
$$A_n^{(q)}:=\{X_0>u_n,X_1\leq u_n,\ldots, X_q\leq u_n\}.%=\{(x,\omg): x\in U_n, f_{\omg}(x)\notin U_n, \ldots, f_{\omg}^q(x)\notin U_n\}.
$$

For $s,\ell\in\N$ and an event $B$, let
\begin{equation}
\label{eq:W-def}
\mathscr W_{s,\ell}(B)=\bigcap_{i=s}^{s+\ell-1} T^{-i}(B^c).
\end{equation}

\begin{condition}[$\D_q(u_n)$]\label{cond:D} We say that $\D(u_n)$ holds for the sequence $X_0,X_1,\ldots$ if, for every  $\ell,t,n\in\N$
\begin{equation}\label{eq:D1}
\left|\p\left(\A_n\cap
 \mathscr W_{t,\ell}\left(\A_n\right) \right)-\p\left(\A_n\right)
  \p\left(\mathscr W_{0,\ell}\left(\A_n\right)\right)\right|\leq \gamma(q,n,t),
\end{equation}
where $\gamma(q,n,t)$ is decreasing in $t$ and  there exists a sequence $(t_n)_{n\in\N}$ such that $t_n=o(n)$ and
$n\gamma(q,n,t_n)\to0$ when $n\rightarrow\infty$.
\end{condition}

For some fixed $q\in\N_0$, consider the sequence $(t_n)_{n\in\N}$ given by condition $\D_q(u_n)$ and let $(k_n)_{n\in\N}$ be another sequence of integers such that
\begin{equation}
\label{eq:kn-sequence}
k_n\to\infty\quad \mbox{and}\quad  k_n t_n = o(n).
\end{equation}

\begin{condition}[$\D'_q(u_n)$]\label{cond:D'q} We say that $\D'_q(u_n)$
holds for the sequence $X_0,X_1,\ldots$ if there exists a sequence $(k_n)_{n\in\N}$ satisfying \eqref{eq:kn-sequence} and such that
\begin{equation}
\label{eq:D'rho-un}
\lim_{n\rightarrow\infty}\,n\sum_{j=1}^{\lfloor n/k_n\rfloor}\p\left( \A_n\cap T^{-j}\left(\A_n\right)
\right)=0.
\end{equation}
\end{condition}

We note that, when $q=0$, condition $\D'_q(u_n)$ corresponds to condition $D'(u_n)$ from \cite{L73}.

Now let
\begin{equation}
\label{eq:OBrien-EI}
\vartheta=\lim_{n\to\infty}\vartheta_n=\lim_{n\to\infty}\frac{\p(\A_n)}{\p(U_n)}.
\end{equation}

From \cite[Corollary~2.4]{FFT14}, it follows that if the stochastic process $X_0, X_1,\ldots$ satisfies both conditions $\D_q(u_n)$ and $\D'_q(u_n)$, the limit in \eqref{eq:OBrien-EI} exists and %there is $\tau$ such that
%$$\lim_{n\to\infty}n\p(X_0>u_n)=\tau$$
%then
$$\lim_{n\to\infty}\p(M_n\leq u_n)= \e^{-\vartheta\tau}.$$

Hence, the strategy to prove Theorems~\ref{thm:dichotomy1} and \ref{thm:dichotomy2} is to show that conditions $\D_0(u_n)$ and $\D'_0(u_n)$ hold when $\zeta$ is a non periodic point, and that conditions $\D_q(u_n)$ and $\D'_q(u_n)$ hold when $\zeta$ is a periodic point of prime period $q$. In fact, we only check these conditions for the usual Euclidean metric as in the context of Theorem~\ref{thm:dichotomy1}, which is technically harder, leaving the necessary adjustments when dealing with the adapted metric $d^*$ for the reader.

\section{Checking condition $\D_q(u_n)$}
\label{sec:D}

Let $D$ be a set homeomorphic to a ball whose boundary is piecewise smooth, so $m(\partial D)=0$, and define
\[
 H_{k}(D) = \left\{x \in D:T^{k}(W^s_1(x))\cap \partial D\ne\emptyset\right\}.
 \]

\begin{proposition}\label{prop:annulus1}
There exist constants $C>0$ and $0<\tau_1<1$ such that, for all $k$,
\begin{equation}\label{annulus}
 m(H_{ k}(D))\le  C \tau_1^{k}.
\end{equation}
\end{proposition}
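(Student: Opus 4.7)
The plan is to exploit the fact that $T$ contracts local stable manifolds by the factor $|\lambda|^{-1}$ at each iterate. For $x\in H_k(D)$, the image $T^k(W^s_1(x))$ is a local stable segment through $T^k(x)$ whose arclength is at most $|\lambda|^{-k}$ (using the norm adapted to the hyperbolic splitting). Since the condition defining $H_k(D)$ requires $T^k(W^s_1(x))\cap \partial D\ne\emptyset$, one of the points on this segment lies in $\partial D$, and thus $\dist(T^k(x),\partial D)\le |\lambda|^{-k}$.

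Consequently, writing $N_k:=\{y\in\mathbb T^2:\dist(y,\partial D)\le |\lambda|^{-k}\}$, we obtain the inclusion $T^k(H_k(D))\subseteq N_k$. Next, I would invoke Remark~\ref{rem:assumptionA}: applied to $A=D$ and to $A=\mathbb T^2\setminus \bar D$ (i.e.\ on both sides of $\partial D$), it yields $m(N_k)=|\partial D|\cdot|\lambda|^{-k}+o(|\lambda|^{-k})$, so in particular $m(N_k)\le C_0\,|\lambda|^{-k}$ for some constant $C_0$ depending only on the length $|\partial D|$ and for all $k$ large enough; by adjusting $C_0$ the bound can be made to hold for every $k$.

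Finally, because $T$ is a diffeomorphism preserving $m$, $T^k$ is measure-preserving as well, so
\[
 m(H_k(D))=m\l T^k(H_k(D))\r\le m(N_k)\le C_0\,|\lambda|^{-k}.
\]
Setting $\tau_1=|\lambda|^{-1}$ and $C=C_0$ gives the claimed bound.

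The step that requires the most care is the very first one: justifying that the arclength of $T^k(W^s_1(x))$ along the stable leaf translates directly into an ambient distance bound on $\mathbb T^2$. This is where the choice of the adapted Riemannian metric (in which $L$ is a uniform contraction on $E^s$ with rate exactly $|\lambda|^{-1}$) is essential, since the stable manifolds are just projections to $\mathbb T^2$ of affine lines parallel to $E^s$; for $|\lambda|^{-k}$ small the projected segment is embedded isometrically in the torus, so its length equals the ambient distance between its endpoints. No serious subtlety beyond this appears, because the piecewise smoothness of $\partial D$ is exactly what makes Remark~\ref{rem:assumptionA} applicable to the tubular neighborhood $N_k$.
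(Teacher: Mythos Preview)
Your proposal is correct and follows essentially the same approach as the paper: bound the length of $T^k(W^s_1(x))$ by a constant times $|\lambda|^{-k}$ via uniform stable contraction, deduce that $T^k(x)$ lies in a tubular neighbourhood of $\partial D$ of width $\sim|\lambda|^{-k}$, apply Remark~\ref{rem:assumptionA} to bound the measure of this neighbourhood, and transfer the bound back using the $T$-invariance of $m$. The only cosmetic difference is that the paper phrases the containment in terms of the whole leaf $T^k(W^s_1(x))$ lying in an annulus, whereas you single out the point $T^k(x)$; your version is in fact slightly more explicit about how invariance of $m$ enters.
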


\begin{proof}
 As a consequence of the uniform contraction within local stable manifolds, % ~\cite[(P2)]{Y98},
there exists %$\alpha\in(0, 1)$ and
$C_1>0$ such that $\dist\,(T^n(x), T^n(y))\le C_1/|\lambda|^n$ for all
 $y\in W^s_1(x).$ In particular, this implies that $|T^k(W_1^s(x))|\le C_1/|\lambda|^k$. Therefore, for every  $x\in H_{k}(D)$,
 the leaf $T^k(W^s_1(x))$ lies in an annulus of width $2 /|\lambda|^k$ around $\partial D$. By Remark~\ref{rem:assumptionA} and the invariance of $m$, the result follows with $\tau_1=1/|\lambda|$.
\end{proof}

%The constant $\tau_1$ below is from Proposition~\ref{prop:annulus1}.

\begin{lemma}\label{lemma:dun-prelim}
Suppose
 $\Phi:M\to \R$ is a Lipschitz map and $\Psi$ is the indicator function
 \[
\Psi:= \I_{\mathscr W_{0,\ell}\left(\A_n\right)}.
 \]
Then for all $j\geq 0$
\begin{equation}
 \left|\int\Phi\,(\Psi\circ T^j)\, \text{d}m - \int\Phi\text{d}m\int\Psi\text{d}m\right|\le \C\,\left(\|\Phi\|_\infty \,\,\tau_1^{\floor{j/2}}+\|\Phi\|_{\text{Lip}}\,\,\theta^{\floor{j/2}}\right).
\end{equation}
\end{lemma}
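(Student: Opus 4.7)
The plan is to split the time gap $T^j$ in half and exploit the stronger decay of correlations \eqref{Lip_infinity_decay}, which only requires the second factor to be constant on local stable leaves. Write $N=\floor{j/2}$ and $M=j-N\ge N$, and use the $T$-invariance of $m$ to put
\[
\int \Phi\,(\Psi\circ T^j)\,dm=\int \Phi\,\l(\l(\Psi\circ T^M\r)\circ T^N\r)\,dm.
\]

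The next step is to approximate $\Psi\circ T^M$ by a bounded function $\tilde\psi$ with $\|\tilde\psi\|_\infty\le 1$ that is constant on every local stable leaf $W^s_1(\cdot)$. Since $\Psi$ is the indicator of $E:=\mathscr W_{0,\ell}(\A_n)=\bigcap_{i=0}^{\ell-1}T^{-i}(\A_n^c)$, whose boundary is a finite union of smooth arcs coming from preimages of spheres $\partial B_{\e^{-u_n}}(\zeta)$, the function $\Psi\circ T^M$ is constant on $W^s_1(x)$ as soon as $T^M W^s_1(x)$ does not intersect $\partial E$. Set
\[
H:=\{x\in\mathbb T^2:\, T^M W^s_1(x)\cap\partial E\ne\es\},
\]
and define $\tilde\psi$ to agree with $\Psi\circ T^M$ outside $H$ and to vanish on the stable-saturation of $H$. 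Proposition~\ref{prop:annulus1}, applied to each smooth piece of $\partial E$, yields $m(H)\le C\tau_1^M$; the constant $C$ is allowed to depend on $E$ through the total length of $\partial E$ but not on $M$.

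With this $\tilde\psi$ at hand, I split the correlation into three pieces:
\begin{align*}
\int \Phi\,(\Psi\circ T^j)\,dm - \int\Phi\,dm\int\Psi\,dm
&= \int\Phi\cdot\bigl(\l(\Psi\circ T^M-\tilde\psi\r)\circ T^N\bigr)dm\\
&\quad + \l(\int\Phi\,(\tilde\psi\circ T^N)\,dm-\int\Phi\,dm\int\tilde\psi\,dm\r)\\
&\quad + \int\Phi\,dm\cdot\l(\int\tilde\psi\,dm-\int\Psi\,dm\r).
\end{align*}
The first and third terms are both bounded by $\|\Phi\|_\infty\, m(H)\le C\|\Phi\|_\infty\,\tau_1^{M}\le C\|\Phi\|_\infty\,\tau_1^{\floor{j/2}}$, using the $T$-invariance of $m$ and the identity $\int\Psi\,dm=\int\Psi\circ T^M\,dm$. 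The middle term is estimated by applying \eqref{Lip_infinity_decay} with $\phi=\Phi$ (Lipschitz, hence $1$-H\"older) and $\psi=\tilde\psi$, yielding a bound of order $\theta^{N}\|\Phi\|_{1}\|\tilde\psi\|_\infty\le C\theta^{\floor{j/2}}(\|\Phi\|_\infty+\|\Phi\|_{\text{Lip}})$. Combining the three estimates produces the desired inequality after relabelling constants as $\C$.

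The main technical subtlety I foresee is the measure bound $m(H)\le C\tau_1^M$: Proposition~\ref{prop:annulus1} as stated deals with sets homeomorphic to a topological ball, whereas $E$ is an intersection of such preimages and its boundary has several connected components. This forces a decomposition along the smooth pieces of $\partial E$, and the constant $C$ will depend on their total length. Once that bookkeeping step is carried out, the remainder of the argument is a routine combination of decay of correlations with the approximation, and absorbing all fixed constants into a single $\C$ gives the stated estimate.
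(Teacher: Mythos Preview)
Your overall architecture matches the paper's proof: halve the time gap, replace $\Psi\circ T^{\lceil j/2\rceil}$ by a function constant on local stable leaves, control the approximation error via Proposition~\ref{prop:annulus1}, and apply \eqref{Lip_infinity_decay} to the remainder. The paper builds the stable-constant approximation by sliding along $W^s_1$ to a fixed reference unstable leaf, while you zero out on the stable saturation of the bad set $H$; both constructions serve the same purpose.

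There is, however, a genuine gap in your control of $m(H)$. You write that Proposition~\ref{prop:annulus1} gives $m(H)\le C\tau_1^M$ with ``$C$ allowed to depend on $E$ through the total length of $\partial E$''. But $\partial E\subset\bigcup_{i=0}^{\ell-1}T^{-i}(\partial\A_n)$, and $|T^{-i}(\partial\A_n)|$ grows like $|\lambda|^i|\partial\A_n|$, so the total length of $\partial E$ is of order $|\lambda|^{\ell}$. A constant depending on this is not uniform in $\ell$, and the lemma (and the condition $\D_q(u_n)$ it feeds into) requires a bound independent of $\ell$.

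The paper avoids this by never measuring $\partial E$. Instead it observes that the set where the approximation fails is contained in $\bigcup_{k=\lfloor j/2\rfloor}^{\lfloor j/2\rfloor+\ell-1}H_k(\A_n)$: if $T^MW^s_1(x)$ meets $T^{-i}(\partial\A_n)$ then $T^{M+i}W^s_1(x)$ meets $\partial\A_n$, so $x$ lies in $H_{M+i}(\A_n)$. Proposition~\ref{prop:annulus1} is then applied with $D=\A_n$ at time $M+i$, yielding $m(H_{M+i}(\A_n))\le C\tau_1^{M+i}$ with $C$ depending only on $|\partial\A_n|$ (which is uniformly bounded in $n$), and the sum $\sum_{k\ge\lfloor j/2\rfloor}C\tau_1^{k}$ is controlled uniformly in $\ell$. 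Your argument becomes correct once you make exactly this substitution in the bookkeeping step you flagged.
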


\begin{proof}

We choose for reference the fixed point $\zeta$ of $T$ and its local unstable manifold $\tilde\gamma^u:=W_1^u(\zeta)$. By the hyperbolic product structure, each local stable manifold $W^s_1(x)$ intersects $\tilde\gamma^u$ in a unique point $\hat{x}$. Therefore, for every map $\Psi$ we define the function $\overline\Psi(x):=\Psi(\hat x)$. Also, for every $i\in\N$,  let $\Psi_i=\Psi\circ T^i$. We note that $\overline \Psi_i$ is constant along stable manifolds %in $\Delta$
and that the set of points where $\overline \Psi_i\neq\Psi_i$ is, by definition, the set of $x$  for which there
exist $r \in \mathbb{N}_0$ and $x_1,x_2$ on the same local stable manifold as $T^r (x)$ such that
\[
x_1\in\mathscr W_{i,\ell}\left(\A_n\right)
\]
 but
\[
x_2 \notin\mathscr W_{i,\ell}\left(\A_n\right).
\]
Moreover, this set is contained in $\cup_{k =i}^{i+\ell-1}H_k(\A_n)$. If
we let $i\ge\floor{j/2}$ then by Proposition~\ref{prop:annulus1}
we  have
\[m(\set{\Psi_{i}\neq\overline\Psi_{i}}) \le
\sum_{k=\floor{j/2}}^{\infty} m(H_k(\A_n))\le \C \,\tau_1^{\floor{j/2}}.\]
By \eqref{Lip_infinity_decay}, we also get
\[
 \abs{\int\Phi\,(\,\overline\Psi_{\floor{j/2}}\circ T^{j -\floor{j/2}})\,dm - \int\Phi dm\int \overline\Psi_{\floor{j/2}}dm}\le \C\,\|\Phi\|_{\text{Lip}}\,\,\|\bar\Psi\|_{\infty}\,\,\theta^{\floor{j/2}}.
\]
%Recall,
%\[
%\abs{\int \Phi\Psi_{A+\floor{j/2}}\circ T^{j -\floor{j/2}}d\mu - \int\Phi d\nu\int \Psi_{A+\floor{j/2}}d\mu}
%= \abs{\int\tilde\Phi\tilde\Psi_{A+\floor{j/2}}\circ F^{j -\floor{j/2}}d\nu - \int\tilde\Phi d\nu\int \tilde\Psi_{A+\floor{j/2}}d\nu}
%\]
Using the identity
$$\int \phi\,(\psi\circ T)-\int \phi \int \psi = \int\phi \,(\psi\circ T-\bar{\psi}\circ T)+
\int \phi\,(\bar{\psi}\circ T) -\int \phi \int \bar{\psi} +\int \phi \int \bar{\psi} -\int \phi \int \psi$$
we obtain
\begin{align}
%\abs{\int\tilde\Phi\tilde\Psi_{A+\floor{j/2}}\circ F^{j -\floor{j/2}}d\nu - \int\tilde\Phi d\nu\int \tilde\Psi_{A+\floor{j/2}}d\nu} \nonumber\\
%\le
\Big|\int \Phi\,\left(\Psi_{\floor{j/2}}\circ T^{j -\floor{j/2}}\right)\,dm &- \int\Phi dm\int \Psi_{\floor{j/2}}\,dm\Big|\nonumber\\
%%
%= \abs{\int\tilde\Phi\tilde\Psi_{A+\floor{j/2}}\circ F^{j -\floor{j/2}}d\nu - \int\tilde\Phi d\nu\int \tilde\Psi_{A+\floor{j/2}}d\nu}\nonumber\\
%
&\le \abs{\int \Phi\, \left((\Psi_{\floor{j/2}} - \overline\Psi_{\floor{j/2}})\circ T^{j - \floor{j/2}}\right)\,dm}  +\C \,\|\Phi\|_{\text{Lip}}\,\,\theta^{\floor{j/2}} \nonumber\\
&\quad + \abs{\int\Phi \,dm \int\left(\overline\Psi_{\floor{j/2}} - \Psi_{\floor{j/2}}\right)}%\circ F^{j - \floor{j/2}}
\,dm\nonumber\\
&\le \C\left(2\,\|\Phi\|_\infty \,\,m\set{\overline\Psi_{\floor{j/2}}\neq\Psi_{\floor{j/2}}}+\|\Phi\|_{\text{Lip}}\,\,\theta^{\floor{j/2}}\right)\nonumber\\
&\le \C\left(\|\Phi\|_\infty \,\, \tau_1^{\floor{j/2}}+\|\Phi\|_{\text{Lip}}\,\,\theta^{\floor{j/2}}\right).
\end{align}
We complete the proof by observing that $\int\Psi \,dm = \int\Psi_{\floor{j/2}} \,dm$, due to the $T$-invariance of $m$ and the fact that
 $\Psi_{\floor{j/2}}\circ T^{j -\floor{j/2}} = \Psi_{j} = \Psi\circ T^{j}.$
\end{proof}

To prove condition $\D(u_n)$, we will approximate the characteristic function of the set $\A_n$ by a suitable Lipschitz function. However, this Lipschitz function will decrease sharply to zero near the boundary of the set $\A_n$. As the estimate in Lemma~\ref{lemma:dun-prelim} involves the Lipschitz norm, we need to bound its increase as we approach $\partial \A_n$.

Let $A_n=\A_n$ and $D_n:=\set{x\in \A_n:\;\dist\left(x, \overline{A_n^c}\right)\geq n^{-2}},$
%$D:=\overline{B_{s-s^{1+\eta}}(\zeta)}$,
 where $\bar A_n^c$ denotes the closure of the complement of the set $A_n$.
Define $\Phi_n:\X\to\R$ as
\begin{equation}
\label{eq:Lip-approximation}
\Phi_n(x)=\begin{cases}
  0&\text{if $x\notin A_n$}\\
  \frac{\dist(x,A_n^c)}{\dist(x,A_n^c)+
  \dist(x,D_n)}&
  \text{if $x\in A_n\setminus D_n$}\\
  1& \text{if $x\in D_n$}
\end{cases}.
\end{equation}
Note that $\Phi_n$ is Lipschitz continuous with Lipschitz constant given by $n^2$. % $\Phi_n$ is equal to $1$ on $D_n$ and vanishes outside $\A_n$.
Moreover, as observed in  Remark~\ref{rem:assumptionA}, we have $\|\Phi_n-\I_{A_n}\|_{L^1(m)}\leq C/n^2$ for some constant $C$.

It follows that
\begin{align}
\Big|\int \I_{\A_n}\,\left(\Psi_{\floor{j/2}}\circ T^{j -\floor{j/2}}\right) &\,dm - m(\A_n)\int \Psi dm\Big|\nonumber\\
&\le \abs{\int \left(\I_{\A_n} - \Phi_n\right)\Psi_{\floor{j/2}}\,dm}+\C \left(\|\Phi_n \|_\infty \,\,j^2\,\,\tau_1^{\floor{j/4}}+\|\Phi_n \|_{\text{Lip}}\,\,\theta^{\floor{j/2}}\right)\nonumber\\
% continued from the previous line
&\quad+ \abs{\int \left(\I_{\A_n} - \Phi_n \right)\,dm \int \Psi_{\floor{j/2}}\,dm},
\end{align}
and consequently
\[
 \abs{m\left(\A_n\cap \mathscr W_{j,\ell}(\A_n)\right) - m(\A_n)\,m\left(\mathscr W_{0,\ell}(\A_n)\right)}\le \gamma(n,j)
\] where
\[
 \gamma(n,j) = \C\,\left(n^{-2}+ n^{2} \,\theta_1^{\floor{j/2}}\right)
\] and
\[
\theta_1 = \max\,\set{\tau_1, \theta}.\]
Thus if, for instance, $j=t_n=(\log n)^{5}$, then $n\gamma(n, t_n)\to 0$ as $n\to\infty.$  Note that we have considerable
freedom of choice of $j$ in order to ensure that the previous limit is zero; taking into account the possible applications, we chose $t_n=(\log n)^{5}$.

\section{Checking condition $\D'_q(u_n)$}
\label{sec:D'q}

Before checking the condition $\D'_q(u_n)$, we observe that we only need to consider the sum in \eqref{eq:D'rho-un} up to $t_n=(\log n)^5$ since, by the exponential decay of correlations, the sum of the remaining terms goes to $0$. More precisely, we use decay of correlations to show that
\begin{equation}\label{limiting.short.terms}
 \lim_{n\rightarrow \infty} n \sum_{j=\log^5n}^{\floor{n/k_n}} m (\A_n\cap T^{-j}\A_n)=0.
\end{equation}

Let $\Phi_n$ be a suitable Lipschitz approximation of $\I_{\A_n}$, defined as in \ref{eq:Lip-approximation}. Then
\begin{align*}
\left|\int 1_{\A_n}(1_{\A_n} \circ T^j)\,dm - \left(\int 1_{\A_n}~dm\right)^2 \right| &\le \left |\int \Phi_n(\Phi_n \circ T^j)\,dm -\left(\int \Phi_n~dm\right)^2\right|\\
&\quad+\left|\left(\int \Phi_n~dm\right)^2- \left(\int 1_{\A_n}~dm\right)^2\right|\\
&\quad+ \left| \int 1_{\A_n}(1_{\A_n} \circ T^j)\,dm -\int \Phi_n(\Phi_n\circ T^j)\,dm \right|.
\end{align*}
If  $ (\log n)^5\le j \le \floor{n/k_n}$, then, due to the decay of correlations, the first term is upper bounded by
$$
\left|\int \Phi_n (\Phi_n \circ T^j)\,dm -\left(\int \Phi_n~dm\right)^2 \right |\le C \,n^{2}\, \theta^{j}\le C\,{n^{-2}}
$$
if  $n$  is sufficiently large. Recalling Remark~\ref{rem:assumptionA}, for the second term we obtain, for $n$ big enough,
$$
\left|\left(\int \Phi_n \,dm\right)^2-\left(\int 1_{\A_n}~dm\right)^2\right|\le C \,m(\A_n\setminus D_n)\le C \, n^{-2}.
$$
Similarly, we estimate the third term as follows
 $$
\left|\int \Phi_n (\Phi_n \circ T^j)\,dm - \int 1_{\A_n} (1_{\A_n}\circ T^j)\,dm\right|
% \le \mu (\Phi_n (x)\not = 1_{U_n} )+\mu (\Phi_n \circ T^j (x)\not =  1_{T^{-j}U_n} )
\le 2\,m(\A_n\setminus D_n) \le C\, n^{-2}.
$$
 Hence equation~\eqref{limiting.short.terms} is satisfied.

\subsection{The periodic case: checking $\D'_q(u_n)$ for $q>0$}
\label{subsec:D'q}

Suppose $\zeta$ is a periodic point of minimal period $q$  for the map $T$. If $G=T^q$, then $\zeta$ is a fixed point for the Anosov diffeomorphism $G$, which has eigenvalues $\frac{1}{\lambda^q}$ and ${\lambda}^q$, with
$\frac{1}{|\lambda|^q} < 1 < {|\lambda|}^q$, where $\frac{1}{\lambda}$ and $\lambda$ are the eigenvalues of the original map $T$.

\begin{figure}
\centering
\begin{subfigure}{.5\textwidth}
  \centering
  \includegraphics[width=\linewidth]{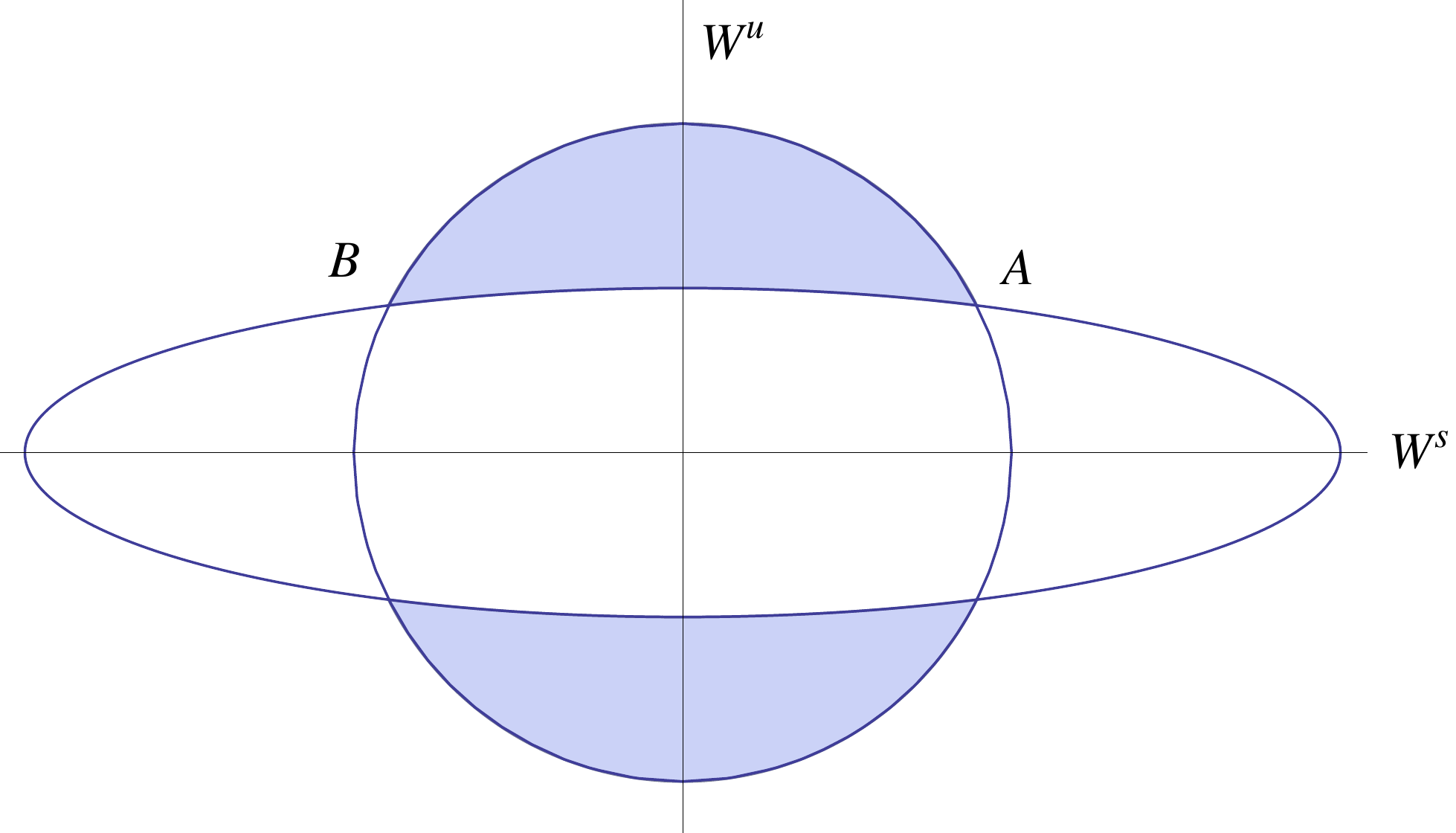}
  \caption{Usual Euclidean metric $d$}
  \label{fig:sub1}
\end{subfigure}%
\begin{subfigure}{.5\textwidth}
  \centering
  \includegraphics[width=\linewidth]{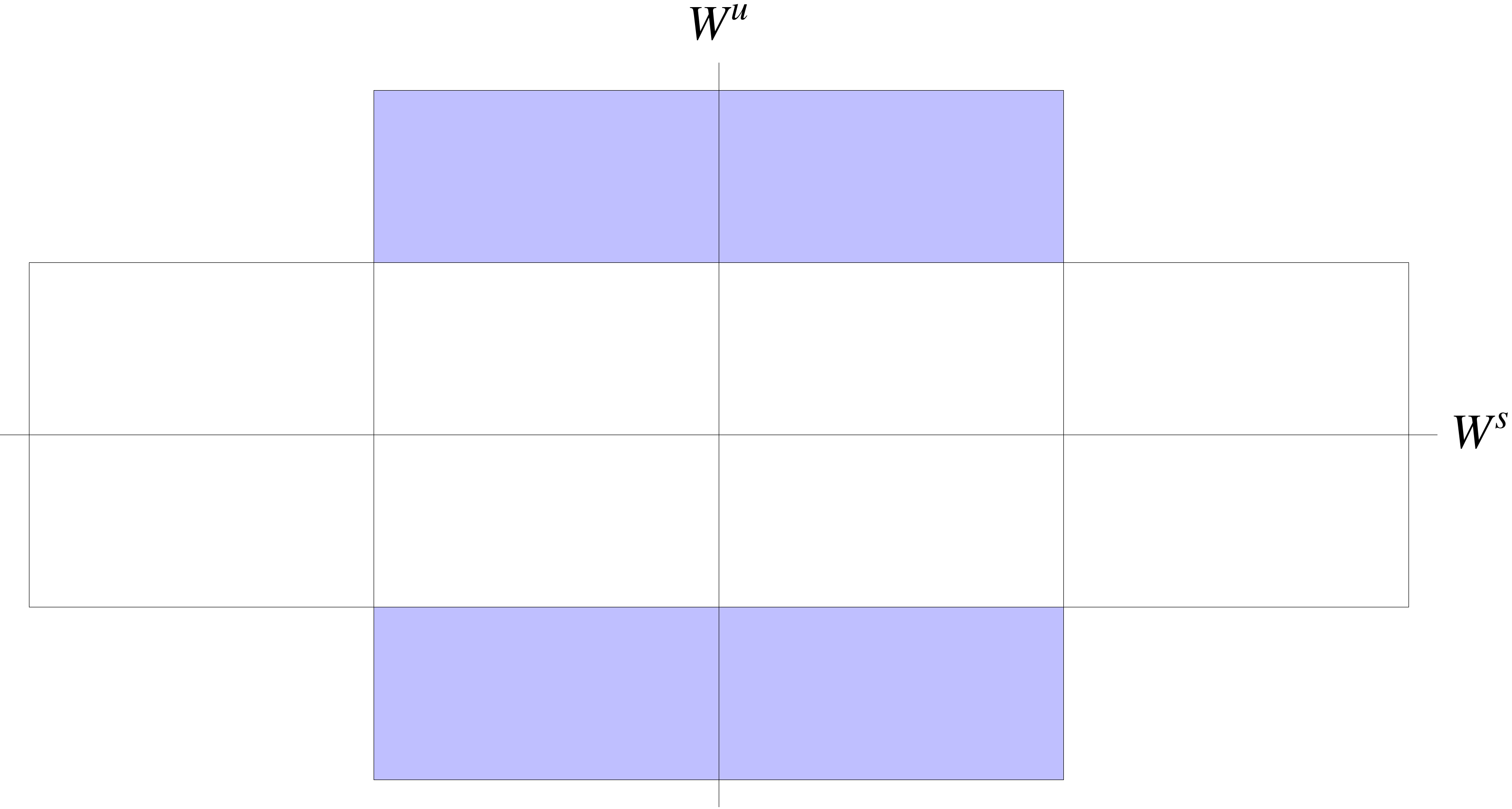}
  \caption{Adapted metric $d^*$}
  \label{fig:sub2}
\end{subfigure}
\caption{The sets $\A_n$ for $q>0$.}
\label{fig:An}
\end{figure}

Note that $\A_n$ corresponds to the shaded region in Figure~\ref{fig:sub1}, the ball represents the set $\{X_0>u_n\}$, while the ellipse is the pre-image $T^{-q}(\{X_0>u_n\})$. To compute the diameter of $\A_n$, we just need to find the distance between the points $A$ and $B$ on the intersection of the ball with the ellipse. If we use local coordinates $x,y$ for the unstable and stable local manifolds through $\zeta$, respectively, then the ball's equation is $x^2+y^2=s_n^2$ and the ellipse's equation is $|\lambda|^{2q}x^2+|\lambda|^{-2q}y^2=s_n^2$, where $s_n^2\sim\tau/(\pi n)$. Then the $x$ coordinate of the points $A$ and $B$ is $\frac{s_n}{\sqrt{|\lambda|^{2q}+1}}$, while the $y$ coordinates are $\frac{|\lambda|^q\,s_n}{\sqrt{|\lambda|^{2q}+1}}$ and its symmetric counterpart, respectively. This means that the diameter of $\A_n$ is $\frac{2\,|\lambda|^q\,s_n}{\sqrt{|\lambda|^{2q}+1}}$.

Note that, under $G^{-j}$, the local stable manifolds in a small ball around $\zeta$ expand uniformly and the local unstable manifolds contract uniformly. By construction of the sets $\A_n$ it follows that $G^{-j}(\A_n)\cap \A_n=\emptyset$, while the diameter of $G^{-j}(\A_n)$, which is measured along the stable direction, is less than $1$. Hence, $G^{-j}(\A_n)\cap \A_n=\emptyset$ for all $j=1,\ldots, g(n)$, where
$$g(n)=\left\lfloor\frac{\log n+\log(|\lambda|^{2q}+1)-2\log(2\,|\lambda|^q(\tau/\pi)^{1/2})}{2q\,\log |\lambda|}\right\rfloor.$$
Since $\zeta$ is periodic of period $q$,  for large $n$  we have $T^{-j}\A_n\cap \A_n=\emptyset$ for $j=1,\ldots,qg(n)$. Thus, 
\begin{equation}
\label{eq:first-parcels}
\sum_{j=1}^{qg(n)} m(\A_n\cap T^{-j}(\A_n))=0.
\end{equation}

Let $x\in \A_n\cap W^u_\zeta$ and $\gamma_x:=W^s_x\cap \A_n$. For $j>qg(n)$, the curve $T^{-j}(\gamma_x)$ may be sufficiently long to wrap around the torus. Yet, if this is the case then
$$|T^{-j}(\gamma_x)\cap \A_n|\leq \frac C{\sqrt n}\,|T^{-j}(\gamma_x)|.$$
Indeed, the length of $\A_n$ in the stable direction is bounded above by $C_1 \,\frac{1}{\sqrt{n}}$, for some constant $C_1$ independent of the local stable manifold. As $T$ restricted to local stable manifolds has bounded distortion (in fact its derivative at any point is even constant, equal to $L$), it follows that $|\{y:y\in \gamma_x,\; T^{-j}(y)\in \A_n\}|\leq \frac C{\sqrt n}\,|\gamma_x|$. Moreover, since the 2-dimensional Lebesgue measure $m$ is invariant by $T$, we have
\begin{align*}
m\left(\A_n\cap T^{-j} (\A_n)\right) & = m\left(\A_n\cap T^j (\A_n)\right) =\int_{\A_n\cap W_\zeta^u} \,|\gamma_x\cap T^j (\A_n)| \, dm_x \\
&\leq \frac C {\sqrt n}\int_{\A_n\cap W_\zeta^u}  \,|\gamma_x|\, dm_x \leq \frac{C}{\sqrt n} \,m(\A_n).
\end{align*}

This implies that 
\begin{equation}
\label{eq:midle-parcels}
n \sum_{j=qg(n)+1}^{log^5 n} m\left(\A_n\cap T^{-j} (\A_n)\right) = O\left(n(\log n)^5 \,n^{-1/2}\, m(\A_n)\right) = O(n^{-1/2}\log^5 n) \xrightarrow[n\to\infty]{} 0
\end{equation}
which, together with \eqref{limiting.short.terms} and \eqref{eq:first-parcels}, yields \eqref{eq:D'rho-un}.

\subsection{The non-periodic case: checking $\D'_q(u_n)$ for $q=0$}
\label{subsec:D'}

Let us consider now a non-periodic point $\zeta$. %Our goal is to show that condition $\D'(u_n)$ holds in this case, in which $q=0$.
Note that, in this case,  $\A_n=U_n=\{X_0>u_n\}$ is a ball centered at $\zeta$ such that $m(U_n)\sim\tau/n$. We split the sum in \eqref{eq:D'rho-un} into three parts:
\begin{align}\label{eq:sum.non.periodic}
n\sum_{j=0}^{\floor{n/k_n}} m(U_n\cap T^{-j}(U_n))&=n\sum_{j=0}^{g(n)} m(U_n\cap T^{-j}(U_n)) +n\sum_{j=g(n)+1}^{\log^5 n}m(U_n\cap T^{-j}(U_n))\nonumber\\
&\quad+n \sum_{j=\log^5 n+1}^{n/k_n}m(U_n\cap T^{-j}(U_n)),
\end{align}
where $g(n)=\floor{\frac{\log n-\log\pi}{2\log |\lambda|}}$ is such that, for $j>g(n)$, the set $T^{-j}(U_n)$ is sufficiently stretched along the stable direction to start wrapping around the torus. The third summand on the right of \eqref{eq:sum.non.periodic} goes to $0$ on account of \eqref{limiting.short.terms}. We proceed considering the other summands, starting by the first. Contrary to the periodic setting, we now have to deal with the possibility of  $U_n\cap T^{-j}(U_n)\neq \emptyset$ for $j\leq g(n)$. Let $R_n=\min\{j\in\mathbb N: U_n\cap T^{-j}(U_n)\neq \emptyset\}$. By continuity of $T$ and the fact that $\displaystyle \cap_n U_n=\{\zeta\}$, we have that $R_n\to\infty$ as $n\to\infty$.

For $j\leq g(n)$, observe that $T^{-j}(U_n)$  is an ellipse centered at $T^{-j}(\zeta)$, stretched along the stable direction by a factor $|\lambda|^j$ and contracted on the unstable direction by $1/|\lambda|^j$. Also, notice that, as we are dealing with a projection on the torus of a linear map in $\R^2$, the stable and unstable directions are always lined up regardless of the position of $T^{-j}(\zeta)$. Now, since the set $T^{-j}(U_n)$ is not long enough (on the stable direction) to wrap around the torus, the diameter of $U_n$ is $2 s_n$, where $s_n\sim \sqrt\tau/\sqrt {\pi n}$. The length of $T^{-j}(U_n)$ along the stable direction (which coincides with the diameter of $T^{-j}(U_n)$)  is $|\lambda|^j \,s_n$; and the length of  $T^{-j}(U_n)$ on the unstable direction is $|\lambda|^{-j}\, s_n$. So, by the geometry of the sets, the area of the intersection $U_n\cap T^{-j}(U_n)$ occupies a portion of $T^{-j}(U_n)$ corresponding at most to a constant times $|\lambda|^{-j}$ of its area. That is, there exists some $C>0$ independent of $j$ such that
$$
m(U_n\cap T^{-j}(U_n))\leq C \,|\lambda|^{-j} \,m(T^{-j}(U_n))= C\, |\lambda|^{-j} \,m(U_n).
$$
In the case of the Euclidean metric we can take $C\approx2/\pi.$ Then it follows that
$$
n\sum_{j=0}^{g(n)} m(U_n\cap T^{-j}(U_n))=n\sum_{j=R_n}^{g(n)} |\lambda|^{-j}\,\tau/n\leq C \,\tau \,|\lambda|^{-R_n}\xrightarrow[n\to\infty]{}0.
$$

We are left to estimate the second summand of \eqref{eq:sum.non.periodic}. Let $x\in U_n\cap W^u_\zeta$ and $\gamma_x:=W^s_x\cap U_n$. For $j>g(n)$, the curve $T^{-j}(\gamma_x)$ may be sufficiently long to wrap around the torus. Set
$$\tilde U_n^j:=\bigcup_{\set{x\,\in\, U_n\cap W^u_\zeta\,:\, |T^{-j}(\gamma_x)|>1/2}} \gamma_x.$$
Note that for the usual Euclidean metric we have $|W^u_\zeta\cap U_n\setminus\tilde U_n^j|\leq 2 \sqrt{s_n^2-|\lambda|^{-2j}/16}.$ Since for $x\in\tilde U_n^j$, we have $|T^{-j}(\gamma_x)|>1/2$ and the diameter of $U_n$ is less than $2s_n$, then there exists $C>0$ such that, for all $j>g(n)$ and all $x\in\tilde U_n^j$, we have
$$|T^{-j}(\gamma_x)\cap U_n|\leq C\,s_n\,|T^{-j}(\gamma_x)|.$$

As $T$ restricted to local stable manifolds has bounded distortion (since the derivative is constant), it follows that $|\{y:y\in \gamma_x,\; T^{-j}(y)\in U_n\}|\leq  C \,s_n\,|\gamma_x|$. Moreover, by the $T$-invariance of $m$ we have
\begin{align}
m(U_n\cap T^{-j} (\tilde U_n^j))&= m(\tilde U_n^j\cap T^j (U_n))=\int_{\tilde U_n^j\cap W_\zeta^u} \,|\gamma_x\cap T^j (\A_n)| \, dm_x\nonumber\\
&\leq C \,s_n \int_{\tilde U_n^j \cap W_\zeta^u} \, |\gamma_x|\, dm_x \leq C\, s_n\,  m(\tilde U_n^j).
\label{eq:stretched-estimate}
\end{align}

Now, we focus on $T^{-j}(U_n \setminus \tilde U_n^j)$ which is a made of two connected components with diameter $1/2$ along the stable direction and with length $|\lambda|^{-j}\, s_n\Big(1-\sqrt{1-|\lambda|^{-2j}/(16 {s_n}^2)}\Big)$. As before, the geometry dictates that the area of the intersection $U_n\cap T^{-j}(U_n\setminus\tilde U_n^j)$ occupies  a portion of $T^{-j}(U_n\setminus\tilde U_n^j)$ that corresponds at most to a constant times $s_n$ of its area. Indeed, there exists $C>0$ such that,
\begin{equation}
\label{eq:small-estimate}
m(U_n\cap T^{-j}(U_n\setminus\tilde U_n^j))\leq  C\, s_n \,m(T^{-j}(U_n\setminus\tilde U_n^j))= C\, s_n \,m(U_n\setminus\tilde U_n^j).
\end{equation}

Joining \eqref{eq:stretched-estimate}, \eqref{eq:small-estimate} and recalling that $s_n\sim \sqrt\tau/\sqrt {\pi n}$, we deduce that
\begin{equation}
\label{eq:midle-parcels}
n \sum_{j=g(n)+1}^{log^5 n} m(U_n\cap T^{-j} (U_n)) = O(n(\log n)^5 n^{-1/2} m(U_n) )=O(n^{-1/2}\log^5 n) \xrightarrow[n\to\infty]{} 0.
\end{equation}

\section{Computing the Extremal Index}

We recall that the Extremal Index $\vartheta$ is given by the limit in \eqref{eq:OBrien-EI}.

In the case of a non-periodic point $\zeta$, we have $q=0$ and the computation is trivial because $\A_n=U_n$, hence $\vartheta=1$. 

If $\zeta$ is a periodic point of prime period $q$, we need to compute the area of the shaded regions in Figure~\ref{fig:sub1}. Recalling that the second coordinate of the point $A$ of the picture is $y_A=\frac{|\lambda|^q\, s_n}{\sqrt{|\lambda|^{2q}+1}}$ and using the symmetry of the region, we have
\begin{align}
\label{eq:Aq}
m(\A_n)&=4\int_0^{y_A}\sqrt{s_n^2-y^2}-|\lambda|^{-q}\sqrt{s_n^2-y^2\,|\lambda|^{-2q}}\,dy \\
&=2 \,s_n^2 \left(\arcsin\,\frac{|\lambda|^q}{\sqrt{|\lambda|^{2q}+1}}-\arcsin\,\frac{1}{\sqrt{|\lambda|^{2q}+1}}\right)\nonumber.
\end{align}
Thus, in this case,
\begin{equation}
\label{eq:EI-computed}
\vartheta_q=\frac2\pi  \left(\arcsin\,\frac{|\lambda|^q}{\sqrt{|\lambda|^{2q}+1}}-\arcsin\,\frac{1}{\sqrt{|\lambda|^{2q}+1}}\right).
\end{equation}

\begin{remark}
\label{rem:EI-metric}
It is interesting to observe that the metric used affects the value of the Extremal Index. In fact, if we had used the adapted metric, so that its balls would correspond to squares with sides lined up with the stable and unstable directions, then the EI would be equal to $1-|\lambda|^{-q}$, which is unsurprisingly more in tune with the one-dimensional setting. See Figure~\ref{fig:sub2}. Notice also that, in both metrics, the extremal index approaches the non-periodic value as the period goes to $\infty$, that is, $\lim_{q \rightarrow +\infty}\,\vartheta_q=1$.
\end{remark}

\section{Conclusion}
For $C^2$ expanding maps of the interval a dichotomy exists between the extremal behavior of periodic points and non-periodic points~\cite{FP12,FFT13,K12}. In this paper we demonstrated that the same dichotomy holds for hyperbolic total automorphisms.  The extreme value statistics of  observations maximized at generic points in a variety of
non-uniformly hyperbolic systems is known to be the same as that of a sequence of iid random variables with the same distribution function~\cite{FFT10,FFT12,HNT12,GHN11}
but the statistics of functions maximized at periodic points has not yet been established. We believe that it will be the same as in the iid case for smooth non-uniformly hyperbolic
systems but expect mixed distributions (as in~\cite{AFV14})  for non-uniformly  systems with discontinuities.

\section{Rare Events Point Processes}

In order to make the presentation more tractable we started by stating the dichotomy in terms of the possible limit distributions for the maxima when the centers are chosen to be either non-periodic or periodic points. However, we can enrich the results without much extra work by considering the convergence of point processes of exceedances or rare events.

If we consider multiple exceedances we are led to point processes of rare events counting the number of exceedances in a certain time frame. For every $A\subset\R$ we define
\[
\nn_u(A):=\sum_{i\in A\cap\N_0}\I_{X_i>u}.
\]
In the particular case where $A=[a,b)$ we simply write
$\nn_{u,a}^b:=\nn_u([a,b)).$
Observe that $\nn_{u,0}^n$ counts the number of exceedances amongst the first $n$ observations of the process $X_0,X_1,\ldots,X_n$ or, in other words, the number of entrances in $U(u)$ up to time $n$. Also, note that
\begin{equation}
\label{eq:rel-HTS-EVL-pp}
\{\nn_{u,0}^n=0\}=\{M_n\leq u\}%=\{r_{U(u)}>n\}
\end{equation}

In order to define a point process that captures the essence of an EVL and HTS through \eqref{eq:rel-HTS-EVL-pp}, we need to re-scale time using the factor $v:=1/\p(X>u)$ given by Kac's Theorem. Let $\S$ denote the semi-ring of subsets of  $\R_0^+$ whose elements
are intervals of the type $[a,b)$, for $a,b\in\R_0^+$. Denote by $\RR$ the ring generated by $\S$; recall that for every $J\in\RR$
there are $k\in\N$ and $k$ intervals $I_1,\ldots,I_k\in\S$, say $I_j=[a_j,b_j)$ with $a_j,b_j\in\R_0^+$, such that
$J=\cup_{i=1}^k I_j$. For
$I=[a,b)\in\S$ and $\alpha\in \R$, we set $\alpha I:=[\alpha
a,\alpha b)$ and $I+\alpha:=[a+\alpha,b+\alpha)$. Similarly, for
$J\in\RR$, we define $\alpha J:=\alpha I_1\cup\cdots\cup \alpha I_k$ and
$J+\alpha:=(I_1+\alpha)\cup\cdots\cup (I_k+\alpha)$.

\begin{definition}\label{def:rare event process}
Given $J\in\RR$ and sequences $(u_n)_{n\in\N}$ and $(v_n)_{n\in\N}$, the \emph{rare event point process} (REPP) is defined by
counting the number of exceedances (or hits to $U(u_n)$) during the (re-scaled) time period $v_nJ\in\RR$. More precisely, for every $J\in\RR$, set
\begin{equation}
\label{eq:def-REPP} N_n(J):=\nn_{u_n}(v_nJ)=\sum_{j\in v_nJ\cap\N_0}\I_{X_j>u_n}.
\end{equation}
\end{definition}

Under dependence conditions similar to the ones previously analyzed, the REPP just defined converges in distribution to a standard Poisson process when no clustering is involved, and converges in distribution to a compound Poisson process with intensity $\vartheta$ and a geometric multiplicity distribution function otherwise. For the sake of completeness, we now define what we mean by a Poisson and a compound Poisson process. (See \cite{K86} for more details.)

\begin{definition}
\label{def:compound-poisson-process}
Let $Y_1, Y_2,\ldots$ be  an iid sequence of random variables with common exponential distribution of mean $1/\vartheta$. Let  $Z_1, Z_2, \ldots$ be another iid sequence of random variables, independent of the previous one, and with distribution function $\pi$. Given these sequences, for $J\in\RR$, set
$$
N(J)=\int \I_J\;d\left(\sum_{i=1}^\infty Z_i \delta_{Y_1+\ldots+Y_i}\right),
$$
where $\delta_t$ denotes the Dirac measure at $t>0$.  Whenever we are in this setting, we say that $N$ is a compound Poisson process of intensity $\vartheta$ and multiplicity d.f.\ $\pi$.
\end{definition}

\begin{remark}
\label{rem:poisson-process}
In this paper, the multiplicity will always be integer valued. This means that $\pi$ is completely defined by the values $\pi_k=\p(Z_1=k)$, for every $k\in\N_0$. Note that, if $\pi_1=1$ and $\vartheta=1$, then $N$ is the standard Poisson process and, for every $t>0$, the random variable $N([0,t))$ has a Poisson distribution of mean $t$.
\end{remark}

\begin{remark}
\label{rem:compound-poisson}
When clustering is involved, typically $\pi$ is a geometric distribution of parameter $\vartheta \in (0,1]$,  \ie $\pi_k=\vartheta(1-\vartheta)^{k-1}$, for every $k\in\N_0$. This means that, as in \cite{HV09}, the random variable $N([0,t))$ follows a P\'olya-Aeppli distribution
$$
\p(N([0,t)))=k=\e^{-\vartheta t}\sum_{j=1}^k \vartheta^j(1-\vartheta)^{k-j}\frac{(\vartheta t)^j}{j!}\binom{k-1}{j-1},
$$
for all $k\in\N$ and $\p(N([0,t)))=0=\e^{-\vartheta t}$.
\end{remark}

We are now in condition to state the dichotomy in terms of the convergence of REPP.

\begin{theorem}
\label{thm:dichotomy3}
Let  $T:\mathbb T^2\to \mathbb T^2$ be an Anosov linear diffeomorphism. Fix $\zeta\in \mathbb T^2$ and define the stochastic process $X_0, X_1, X_2, \ldots$ as in \eqref{eq:SP-def}. Let  $(u_n)_{n\in\N}$ be a sequence satisfying \eqref{eq:un} for some $\tau\geq 0$ and $(v_n)_{n\in\N}$ be given by $v_n=1/\p(X_0>u_n)$. Consider the REPP $N_n$ as in Definition~\ref{def:compound-poisson-process}. Then
\begin{enumerate}

\item if $\zeta$ is not periodic, the REPP $N_n$ converges in distribution to the standard Poisson process.

\item if $\zeta$ is periodic of prime period $q$, the REPP $N_n$ converges in distribution to a compound Poisson process with intensity $\vartheta$ given by \eqref{eq:EI-computed} and multiplicity d.f. $\pi$ given by \eqref{eq:multiplicity-dist}.
\end{enumerate}

\end{theorem}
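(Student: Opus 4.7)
The plan is to bootstrap the distributional convergence of maxima from Theorems~\ref{thm:dichotomy1} and \ref{thm:dichotomy2} to point--process convergence by invoking the general compound--Poisson approximation theorem of \cite{FFT14}, whose hypotheses are the dependence conditions $\D_q(u_n)$ and $\D_q'(u_n)$ (already verified in Sections~\ref{sec:D}--\ref{sec:D'q}), the existence of the limit $\vartheta_n\to\vartheta$ (Section~6), and the existence of a limiting multiplicity distribution $\pi$. In the non--periodic case $q=0$ the event $\A_n$ coincides with $U_n$, so there is no cluster structure, $\pi$ is a point mass at $1$, and the output of the theorem is the standard Poisson process. Concretely this amounts to verifying the Kallenberg criterion on the semiring $\S$: for each finite disjoint union $J=\bigcup_i [a_i,b_i)$, the convergence $\E[N_n(J)]\to|J|$ is immediate from stationarity together with the normalisation $v_n=1/\p(X_0>u_n)$, while $\p(N_n(J)=0)\to\e^{-|J|}$ follows from Theorem~\ref{thm:dichotomy1}(1) applied separately in each block $v_n[a_i,b_i)$, with $\D_0(u_n)$ providing asymptotic independence across blocks and $\D_0'(u_n)$ ruling out multiple exceedances inside a block.

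For the periodic case $q\geq 1$ the essential new ingredient is the identification of the multiplicity distribution as geometric, $\pi_k=\vartheta(1-\vartheta)^{k-1}$. By definition $\A_n$ selects those points of $U_n$ whose orbit does not revisit $U_n$ by time $q$, so $\vartheta_n=m(\A_n)/m(U_n)$ is precisely the asymptotic probability that a visit to $U_n$ is the terminal member of its cluster. To extend this to all cluster sizes I would iterate the lens--shaped area computation carried out in \eqref{eq:Aq}. Writing $V_n^{(k)}:=U_n\cap T^{-q}U_n\cap\cdots\cap T^{-(k-1)q}U_n$ and working in local coordinates $(x,y)$ around $\zeta$ in which $T^q$ is diagonal with eigenvalues $\lambda^{\pm q}$, each $V_n^{(k)}$ is the intersection of $k$ ellipses sharing a common center and aligned axes; an induction on $k$ using the same elementary trigonometry as in Section~6 yields $m(V_n^{(k)})/m(U_n)\to(1-\vartheta)^{k-1}$. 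The identity $\pi_k=(1-\vartheta)^{k-1}-(1-\vartheta)^{k}=\vartheta(1-\vartheta)^{k-1}$ then drops out by telescoping, matching \eqref{eq:multiplicity-dist}, and the framework of \cite{FFT14} yields the P\'olya--Aeppli form of the REPP limit described in Remark~\ref{rem:compound-poisson}.

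The main obstacle will be the asymptotic evaluation of the nested intersections $m(V_n^{(k)})$. The ellipses involved have increasingly elongated aspect ratio $|\lambda|^{2kq}$, and their pairwise intersections are four--cusped regions whose areas do not factor exactly as a product; only the leading--order asymptotics in the scale $s_n\sim\sqrt{\tau/(\pi n)}$ should produce the clean geometric decay. I would handle this by exploiting the scaling structure: the rescaled region $s_n^{-1}V_n^{(k)}$ converges to a fixed bounded subset of $\R^2$ whose area can be computed once and for all, independently of $n$, thereby reducing the measure--theoretic problem to a single piece of Euclidean geometry. The nonlinear corrections arising from replacing $T^q$ by its linearisation at $\zeta$ are of order $s_n^{2}=O(1/n)$ relative to the main term and hence negligible under the scaling $n\cdot m(V_n^{(k)})$. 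Once this is in place, the compound--Poisson convergence asserted in Theorem~\ref{thm:dichotomy3}(2) follows directly from the general framework; the adapted--metric analogue only requires repeating the calculation with the squares of Figure~\ref{fig:sub2} in place of the discs, which is strictly easier since the nested intersections are then exact rectangles.
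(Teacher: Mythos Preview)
Your treatment of the non--periodic case is essentially the same as the paper's: verify a mild strengthening of $\D_0(u_n)$ adapted to point processes (the paper calls it $D_3(u_n)$), keep $\D'_0(u_n)$, and invoke the Kallenberg--type criterion via \cite{FFT10}. Nothing to add there.

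The periodic case, however, contains a genuine error. You assert that the nested intersections $V_n^{(k)}=U_n\cap T^{-q}U_n\cap\cdots\cap T^{-(k-1)q}U_n$ satisfy $m(V_n^{(k)})/m(U_n)\to(1-\vartheta)^{k-1}$, and hence that the multiplicity distribution is geometric, $\pi_k=\vartheta(1-\vartheta)^{k-1}$, ``matching \eqref{eq:multiplicity-dist}''. This is false for the Euclidean metric. The sets $V_n^{(k)}$ are intersections of $k$ concentric ellipses with axes $(|\lambda|^{-jq}s_n,|\lambda|^{jq}s_n)$, $j=0,\ldots,k-1$, and their areas do \emph{not} decay geometrically: the exact computation (carried out in the paper using the same trigonometry as in \eqref{eq:Aq}, but now integrating between consecutive intersection points $E_\kappa$, $E_{\kappa+1}$ of the boundary ellipses) gives the arcsin expression in \eqref{eq:multiplicity-dist}, which is manifestly not of the form $\vartheta(1-\vartheta)^{k-1}$. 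The paper makes this point explicitly: in the Euclidean metric one does \emph{not} obtain a geometric multiplicity distribution; only the tail ratio $\pi(\kappa+1)/\pi(\kappa)$ tends to $|\lambda|^{-q}$ as $\kappa\to\infty$. The geometric law $\pi^*(\kappa)=\vartheta^*(1-\vartheta^*)^{\kappa-1}$ arises only for the adapted metric $d^*$, where the nested intersections are exact rectangles (Theorem~\ref{thm:dichotomy4}). So your inductive claim about the ellipse intersections fails, and with it the identification of $\pi$; the correct argument requires computing each $m(Q_{q,0}^\kappa(u_n))$ directly.

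Two smaller points. First, the framework you cite from \cite{FFT14} gives the EVL, not the REPP convergence; for the latter the paper invokes the conditions $D_q(u_n)^*$ and $D'_q(u_n)^*$ of \cite{FFT13}, together with the summability $\sum_{\kappa\geq1}m(U^{(\kappa)}(u_n))\to0$, which needs a separate (easy) check. Second, your remark about ``nonlinear corrections arising from replacing $T^q$ by its linearisation at $\zeta$'' is vacuous here: $T$ is already linear.
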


As in the computation of the EI, the metric used has an important impact on the multiplicity distribution obtained when we have convergence of the REPP to a compound Poisson process. Indeed, a similar result for the adapted metric $d^*$ given in \eqref{eq:adjusted-metric} is as follows.

\begin{theorem}
\label{thm:dichotomy4}
Let  $T:\mathbb T^2\to \mathbb T^2$ be an Anosov linear diffeomorphism. Fix $\zeta\in \mathbb T^2$ and define the stochastic process $X_0, X_1, X_2, \ldots$ as in \eqref{eq:SP-def} for the observable $\varphi$ in \eqref{eq:observable} but evaluated with the metric $d^*$ instead. Let $(u_n)_{n\in\N}$ be a sequence satisfying \eqref{eq:un} for some $\tau\geq 0$ and $(v_n)_{n\in\N}$ be given by $v_n=1/\p(X_0>u_n)$. Consider the REPP $N_n$ as in Definition~\ref{def:rare event process}. Then
\begin{enumerate}

\item if $\zeta$ is not periodic, the REPP $N_n$ converges in distribution to the standard Poisson process.

\item if $\zeta$ is periodic of prime period $q$, the REPP $N_n$ converges in distribution to a compound Poisson process with intensity $\vartheta=1-|\lambda|^{-q}$ and geometric multiplicity d.f. $\pi^*$ given by $\pi^*(\kappa)=\vartheta (1-\vartheta)^{\kappa -1}$, for all $\kappa\in\N$.
\end{enumerate}

\end{theorem}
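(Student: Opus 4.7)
The plan is to mirror the proof of Theorems~\ref{thm:dichotomy1}--\ref{thm:dichotomy2} but with $d$ replaced throughout by $d^*$, and with the distributional statement for $M_n$ upgraded to convergence of $N_n$. The overall engine is the general criterion of \cite{FFT14}: once $\D_q(u_n)$ and $\D'_q(u_n)$ are verified and the limits $\pi^*(\kappa)=\lim_n m(A_n^{(q,\kappa-1)})/m(\A_n)$ identifying the cluster statistics exist, $N_n$ converges in distribution to a compound Poisson process of intensity $\vartheta=\lim_n m(\A_n)/m(U_n)$ and multiplicity distribution $\pi^*$. The non-periodic case is then the degenerate specialisation $q=0$, $\A_n=U_n$, $\vartheta=1$, $\pi^*=\delta_1$, yielding the standard Poisson process.

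Verifying $\D_q(u_n)$ and $\D'_q(u_n)$ for the process generated by $(\varphi,d^*)$ is a routine transcription of Sections~\ref{sec:D} and \ref{sec:D'q}. Proposition~\ref{prop:annulus1} only requires $U_n$ to be homeomorphic to a disc with piecewise smooth Jordan boundary, automatic for a $d^*$-square; the decay of correlations \eqref{Lip_Lip_decay}--\eqref{Lip_infinity_decay} is intrinsic to $T$ and $m$; and the Lipschitz-approximation argument yielding $\gamma(n,j)=C(n^{-2}+n^2\theta_1^{\floor{j/2}})$ is metric-free. In $\D'_q(u_n)$ the geometry is in fact cleaner: $T^{-q}(U_n)$ is now the axis-aligned rectangle of half-widths $|\lambda|^{-q}s_n$ and $|\lambda|^q s_n$ (with $4s_n^2\sim\tau/n$), so both the empty self-intersection for $j<qg(n)$ of \S\ref{subsec:D'q} and the stable-wrap estimate of \S\ref{subsec:D'} transcribe immediately.

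Once the conditions are in place, $\vartheta$ and $\pi^*$ are read off from the coordinate description
\begin{equation*}
T^{-jq}(U_n)=\bigl\{(x^u,x^s):|x^u|\le|\lambda|^{-jq}s_n,\ |x^s|\le|\lambda|^{jq}s_n\bigr\}.
\end{equation*}
For $n$ large the intermediate conditions $X_1,\ldots,X_{q-1}\le u_n$ in the definition of $\A_n$ are automatic by continuity of $T$ at the iterates of $\zeta$, so $\A_n=U_n\setminus T^{-q}(U_n)=\{|x^u|\in(|\lambda|^{-q}s_n,s_n],\;|x^s|\le s_n\}$ and $\vartheta=m(\A_n)/m(U_n)\to 1-|\lambda|^{-q}$. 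For $\pi^*(\kappa)$, intersecting $\A_n$ with $\bigcap_{j=1}^{\kappa-1}T^{jq}(U_n)$ (the orbit lying in $U_n$ at backward times $-q,\dots,-(\kappa-1)q$) adds only the constraint $|x^s|\le|\lambda|^{-(\kappa-1)q}s_n$, because the factorised form above decouples the stable and unstable conditions. The resulting measure is $4s_n^2\vartheta(1-\vartheta)^{\kappa-1}$, so its ratio with $m(\A_n)$ is $(1-\vartheta)^{\kappa-1}$, and differencing successive $\kappa$ delivers $\pi^*(\kappa)=\vartheta(1-\vartheta)^{\kappa-1}$, as claimed.

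The main obstacle I anticipate is reconciling this purely geometric computation with the precise form of the cluster-defining events $A_n^{(q,\kappa-1)}$ of \cite{FFT14}, and checking that contributions from orbits that escape a fixed neighbourhood of $\zeta$ within the time window $\kappa q$ are $o(m(\A_n))$. Since $T^q$ acts on a neighbourhood of $\zeta$ as the \emph{exact} diagonal map $(x^u,x^s)\mapsto(\lambda^q x^u,\lambda^{-q}x^s)$ in the adapted basis, there are no non-linear distortions to tame, and the needed error bounds reduce to the same recurrence estimates already employed in \S\ref{subsec:D'q} to prove $\D'_q(u_n)$.
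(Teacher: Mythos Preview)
Your overall strategy matches the paper's, and your geometric computation of $\vartheta$ and $\pi^*$ is correct. Two points deserve correction or comment.

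First, the engine you invoke is misattributed. The conditions $\D_q(u_n)$ and $\D'_q(u_n)$ of \cite{FFT14} yield only the EVL for $M_n$, not REPP convergence. For the REPP the paper appeals instead to \cite[Theorem~5]{FFT10} in the non-periodic case (requiring the condition $D_3(u_n)$ together with $\D'_0(u_n)$) and to the framework of \cite{FFT13} in the periodic case (requiring $D_q(u_n)^*$ and $D'_q(u_n)^*$, plus the summability $\lim_n\sum_{\kappa\ge1}m(U^{(\kappa)}(u_n))=0$). The paper then remarks that the proofs of $D_3(u_n)$ and $D_q(u_n)^*$ follow from those of $\D_q(u_n)$ ``with minor adjustments'', and that $D'_q(u_n)^*$ is implied by $\D'_q(u_n)$; the summability is checked directly via $U^{(\kappa)}(u_n)\subset\{|x^u|\le|\lambda|^{-\kappa q}s_n,\ |x^s|\le s_n\}$. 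So your claim that everything is a ``routine transcription'' is ultimately right, but the conditions you name are not the ones that do the work.

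Second, your cluster sets are the time-reversal of the paper's. The paper works with $U^{(\kappa)}(u_n)=\bigcap_{j=0}^\kappa T^{-jq}(U_n)$ and $Q_{q,0}^\kappa(u_n)=U^{(\kappa)}\setminus U^{(\kappa+1)}$, computing $\pi^*(\kappa)$ via formula~\eqref{eq:multiplicity}. Your set $\A_n\cap\bigcap_{j=1}^{\kappa-1}T^{jq}(U_n)$ is exactly $T^{(\kappa-1)q}\bigl(Q_{q,0}^{\kappa-1}(u_n)\bigr)$, so by $T$-invariance of $m$ the two computations coincide term by term. This explains why your ``anticipated obstacle'' of reconciling the geometry with the abstract cluster events dissolves: in the adapted metric the sets factor as rectangles and the identification is exact, with no error terms to control.
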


\subsection{Absence of clustering}

When condition $\D'_0(u_n)$ holds, there is no clustering and so we may benefit from a criterion, proposed by Kallenberg \cite[Theorem~4.7]{K86}, which applies only to simple point processes without multiple events. Accordingly, we can merely adjust condition $\D_0(u_n)$ to this scenario of multiple exceedances in order to prove that the REPP converges in distribution to a standard Poisson process. We denote this adapted condition by:

\begin{condition}[$D_3(u_n)$]\label{cond:D^*} Let $A\in\RR$ and $t\in\N$.
We say that $D_3(u_n)$ holds for the sequence
$X_0,X_1,\ldots$ if
\[ \left|\p\left(\{X_0>u_n\}\cap
  \{\nn_{u_n}(A+t)=0\}\right)-\p(\{X_0>u_n\})
  \p(\nn_{u_n}(A)=0)\right|\leq \gamma(n,t),
\]
where $\gamma(n,t)$ is nonincreasing in $t$ for each $n$ and
$n\gamma(n,t_n)\to0$ as $n\rightarrow\infty$ for some sequence
$t_n=o(n)$. (The last equality means that $t_n/n\to0$ as $n\to \infty$).
\end{condition}

In \cite[Theorem~5]{FFT10} it was proved a strengthening of~\cite[Theorem~1]{FF08a} which essentially says that, under conditions $D_3(u_n)$ and $\D'_0(u_n)$, the REPP  $N_n$ defined in \eqref{eq:def-REPP} converges in distribution to a standard Poisson process.

The proof of condition $D_3(u_n)$ follows after minor adjustments to the proof of $\D_q(u_n)$ in Section~\ref{sec:D}. Since condition $\D'_0(u_n)$ holds at every non-periodic point $\zeta$ (see Section~\ref{subsec:D'}), then for all such points $\zeta$  the corresponding REPP $N_n$ converges in distribution to a standard Poisson process.

\subsection{Presence of clustering}
\label{subsec:periodicity}

Condition $\D'_0(u_n)$ prevents the existence of clusters of exceedances, which implies that the EVL is  standard exponential $\bar H(\tau)=\e^{-\tau}$. When $\D'_0(u_n)$ fails, the clustering of exceedances is responsible for the appearance of a parameter $0<\vartheta<1$ in the EVL, called EI, and implies that, in this case, $\bar H(\tau)=\e^{-\vartheta \tau}$. In \cite{FFT12},  the authors established a connection between the existence of an EI less than 1 and a periodic behavior. This was later generalized for REPP in \cite{FFT13}.

For the convergence of the REPP when there is clustering, one cannot use the aforementioned criterion of Kallenberg  because the point processes are not simple anymore and possess multiple events. This means that a much deeper analysis must be done in order to obtain convergence of the REPP. This was carried out in \cite{FFT13} and we describe below the main results and conditions needed.

Let $\zeta$  be a periodic point of prime period $q$. Firstly, we consider the sequence $\left(U^{(\kappa)}(u_n)\right)_{\kappa\geq0}$ of nested balls centred at $\zeta$ given by
\begin{equation}
\label{eq:Uk-definition}
U^{(0)}(u_n)=U(u_n)=U_n
 \quad\text{and}\quad U^{(\kappa)}(u_n)=T^{-q}(U^{(\kappa-1)}(u_n))\cap U(u_n), \quad\text{for all $\kappa\in\N$.}
 \end{equation}
Then, for $i,\kappa,\ell,s\in\N\cup\{0\}$, we define the following events:
\begin{equation}\label{eq:Q-definition}
Q_{q,i}^\kappa(u_n):=T^{-i}\left(U^{(\kappa)}(u_n)-U^{(\kappa+1)}(u_n)\right).
\end{equation}

%Observe that for each $\kappa$, the set $Q_{q,0}^\kappa(u)$ corresponds to an annulus centred at $\zeta$.
Note that $Q_{q,0}^0(u_n)=A_n^{(q)}$.
Besides,
$U_n=\bigcup_{\kappa=0}^\infty Q_{q,0}^\kappa(u_n),$
which means that the ball centred at $\zeta$  which corresponds to $U_n$ can be decomposed into a sequence of disjoint strips where $Q_{q,0}^0(u_n)$ are the most outward strips and the inner strips $Q_{q,0}^{\kappa+1}(u_n)$ are sent outward by $T^p$ onto the strips $Q_{q,0}^\kappa(u_n)$, i.e.,
$T^{q}(Q_{q,0}^{\kappa+1}(u_n))=Q_{q,0}^\kappa(u_n).$

We are now ready to state the adapted condition:

\begin{condition}[$D_q(u_n)^*$]\label{cond:Dp*}We say that $D_q(u_n)^*$ holds
for the sequence $X_0,X_1,X_2,\ldots$ if for any integers $t, \kappa_1,\ldots,\kappa_\zeta$, $n$ and
 any $J=\cup_{i=2}^q I_j\in \mathcal R$ with $\inf\{x:x\in J\}\ge t$,
 \[ \left|\p\left(Q_{q,0}^{\kappa_1}(u_n)\cap \left(\cap_{j=2}^q \nn_{u_n}(I_j)=\kappa_j \right) \right)-\p\left(Q_{q,0}^{\kappa_1}(u_n)\right)
  \p\left(\cap_{j=2}^q \nn_{u_n}(I_j)=\kappa_j \right)\right|\leq \gamma(n,t),
\]
where for each $n$ we have that $\gamma(n,t)$ is nonincreasing in $t$  and
$n\gamma(n,t_n)\to0$  as $n\rightarrow\infty$, for some sequence
$t_n=o(n)$.
\end{condition}

In \cite{FFT13}, for technical reasons only, the authors introduced a slight modification to $\D'_q(u_n)$. This condition was denoted by $D'_q(u_n)^*$ and it requires that
%\begin{equation}
%\label{eq:D'rho-un*}
$$\lim_{n\rightarrow\infty}\,n\sum_{j=1}^{[n/k_n]}\p( Q_{q,0}(u_n)\cap
\{X_j>u_n\})=0,$$ which holds whenever condition $\D'_q(u_n)$ does.
%\end{equation}

From the study developed in \cite{FFT13}, it follows that if $X_0, X_1, \ldots$ satisfies conditions $D_q(u_n)^*$, $D'_q(u_n)^*$ and $\lim_{n\to\infty}\sum_{\kappa\geq1}\p(U^{(\kappa)}(u_n))=0$, where $(u_n)_{n\in\N}$ is such that \eqref{eq:un} holds, then the REPP $N_n$ converges in distribution to a compound Poisson process with intensity $\vartheta$ given by \eqref{eq:OBrien-EI} and multiplicity d.f. $\pi$ given by:
\begin{equation}
\label{eq:multiplicity}
\pi(\kappa)=\lim_{n\to\infty}\frac{%\frac{n}{k_n}
\left(\p(Q_{q,0}^{\kappa-1}(u_n))-\p(Q_{q,0}^\kappa(u_n))\right)}{%\frac{n}{k_n}
\p(Q_{q,0}^0(u_n))}.
\end{equation}

\subsection{Computing the multiplicity distribution}

\begin{figure}
\includegraphics[width=\linewidth]{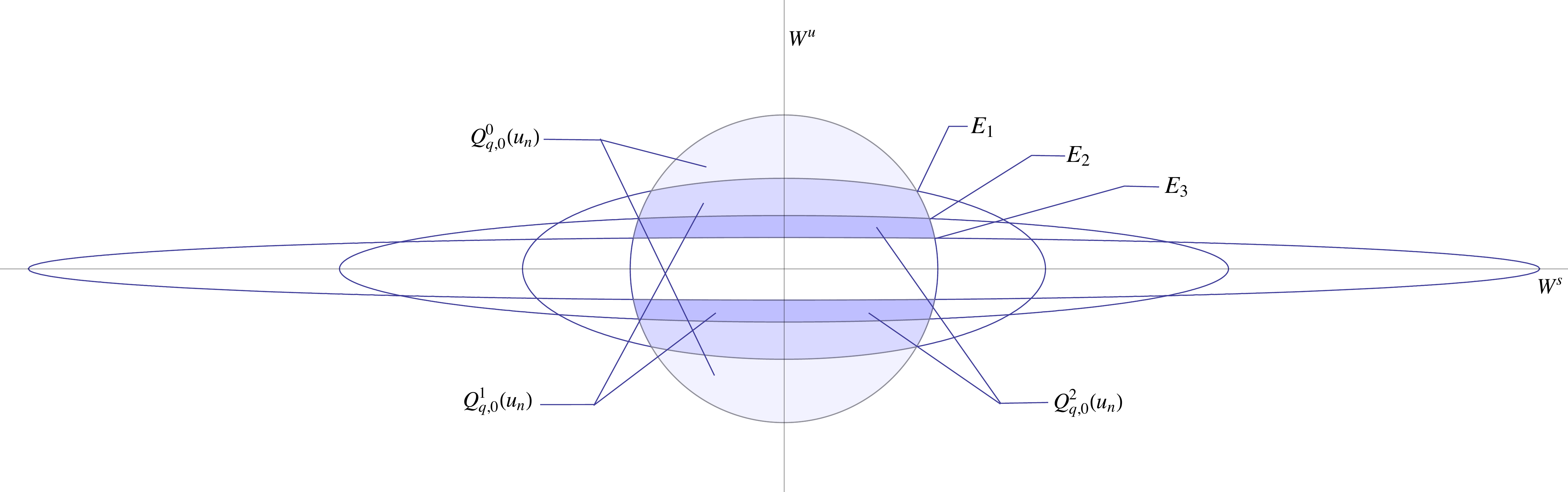}
\caption{The sets $Q_{q,0}^\kappa(u_n)$ for the Euclidean metric.}
\label{fig:Q}
\end{figure}

In order to compute the multiplicity distribution, we need to calculate $m(Q_{q,0}^\kappa)$. To do so, let $E_i$ denote one of the four intersection points of $\partial T^{-i\,q}(U_n)$ with $\partial U_n$.  Let $(x_{E_i},y_{E_i})$ denote the coordinates of $E_i$ on the unstable and stable directions, which correspond to the vertical and horizontal directions, respectively, in Figure~\ref{fig:Q}. We assume that the choice of the $E_i$'s among the possible four points is made so that $x_{E_i},y_{E_i}>0$. Hence, 
$$
y_{E_i}=\frac{s_n|\lambda|^{iq}}{\sqrt{1+\lambda^{2iq}}},
$$
because $\partial T^{-i\,q}(U_n)$ is described by the equation $\lambda^{2iq}x^2+\lambda^{-2iq}y^2=s_n^2$. Therefore, for $\kappa\geq 1$, we may write
\begin{align*}
m(Q_{q,0}^\kappa(u_n))&=4\int_{0}^{y_{E_i}} \lambda^{-2iq}\sqrt{s_n^2-\lambda^{-2iq}y^2}dy+4\int_{y_{E_i}}^{y_{E_{i+1}}} \sqrt{s_n^2-y^2}dy\\&
\quad-4\int_{0}^{y_{E_{i+1}}} \lambda^{-2(i+1)q}\sqrt{s_n^2-\lambda^{-2(i+1)q}y^2}dy\\
&=2s_n^2\left(\arcsin\frac{|\lambda|^{(i+1)q}}{\sqrt{1+\lambda^{2(i+1)q}}}-\arcsin\frac{1}{\sqrt{1+\lambda^{2(i+1)q}}}\right)\\
&\quad +2s_n^2\left(\arcsin\frac{1}{\sqrt{1+\lambda^{2iq}}}-\arcsin\frac{|\lambda|^{iq}}{\sqrt{1+\lambda^{2iq}}}\right)
\end{align*}
Recall that $Q_{q,0}^0(u_n)=A_n^{(q)}$ and its measure has been computed in \eqref{eq:Aq}. Hence, using formula \eqref{eq:multiplicity}, we obtain
\begin{align}
\label{eq:multiplicity-dist}
\pi(\kappa)&=\frac{\left(\arcsin\frac{1}{\sqrt{1+\lambda^{2(\kappa-1)q}}}-\arcsin\frac{|\lambda|^{(\kappa-1)q}}{\sqrt{1+\lambda^{2(\kappa-1)q}}}\right)+\left(\arcsin\frac{1}{\sqrt{1+\lambda^{2(\kappa+1)q}}}-\arcsin\frac{|\lambda|^{(\kappa+1)q}}{\sqrt{1+\lambda^{2(\kappa+1)q}}}\right)}{\arcsin\,\frac{|\lambda|^q}{\sqrt{1+\lambda^{2q}}}-\arcsin\,\frac{1}{\sqrt{1+\lambda^{2q}}}}\nonumber\\
&\quad +\frac{2\left(\arcsin\frac{|\lambda|^{\kappa q}}{\sqrt{1+\lambda^{2\kappa q}}}-\arcsin\frac{1}{\sqrt{1+\lambda^{2\kappa q}}}\right)}{\arcsin\,\frac{|\lambda|^q}{\sqrt{1+\lambda^{2q}}}-\arcsin\,\frac{1}{\sqrt{1+\lambda^{2q}}}}
\end{align}

\begin{figure}
\includegraphics[width=\linewidth]{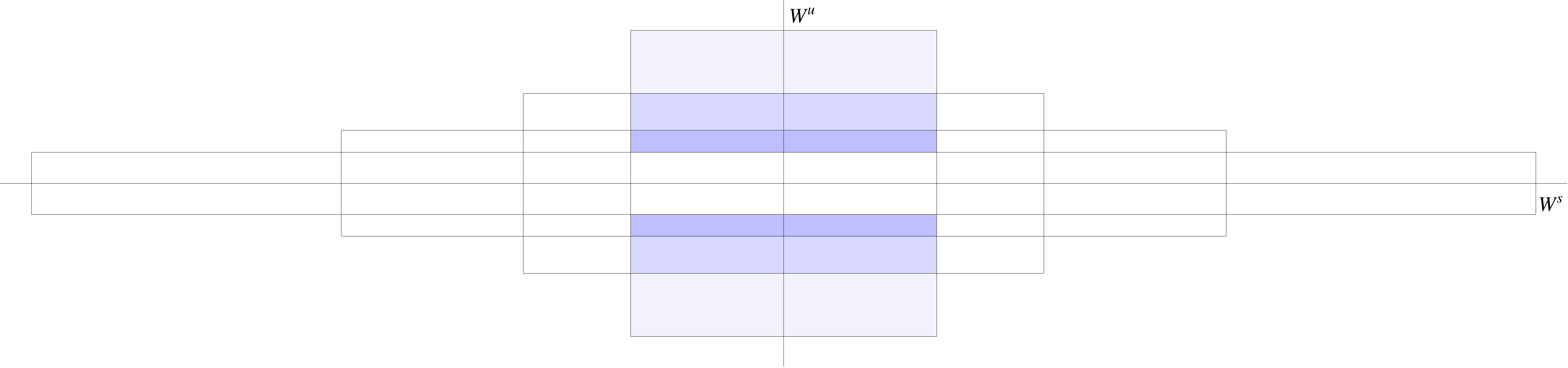}
\caption{The sets $Q_{q,0}^\kappa(u_n)$ in the adapted metric.}
\label{fig:Q-metric2}
\end{figure}

\begin{remark}
\label{rem:pi-metric}
As for the EI (see Remark~\ref{rem:EI-metric}), the multiplicity distribution depends on the metric used. If we were to use the adapted metric $d^*$, in which balls correspond to squares whose sides are lined up with the stable and unstable directions, then we would obtain a geometric distribution for the multiplicity, \ie $\pi^*(\kappa)=\vartheta^*(1-\vartheta^*)^{\kappa -1}$, where $\vartheta^*=1-|\lambda|^{-q}$.
\end{remark}

Observe also that in the Euclidean metric case we do not obtain a geometric distribution, as one did for repelling periodic points (see \cite{FFT13}) or in the case of the adapted metric mentioned in Remark~\ref{rem:pi-metric}. However, taking into account that
$$
\arcsin\left(\frac{x}{\sqrt{1+x^2}}\right)= \frac\pi 2-\frac1x+o(1/x)\quad\mbox{and}\quad \arcsin\left(\frac{1}{\sqrt{1+x^2}}\right)= \frac1x+o(1/x),
$$
one deduces that
$$
\lim_{\kappa\to\infty}\frac{\pi(\kappa+1)}{\pi(\kappa)}=|\lambda|^{-q}=\frac{\pi^*(\kappa+1)}{\pi^*(\kappa)},
$$
where $\pi^*(\kappa)$ is as given in Theorem~\ref{thm:dichotomy4}. Hence, we obtained a multiplicity distribution different from the geometric distribution but which is not that far from it.

The proof of condition $D_q(u_n)^*$ follows with some adjustments from the proof of $\D_q(u_n)$ in Section~\ref{sec:D}. Since condition $\D'_q(u_n)$ holds at every periodic point $\zeta$, as was shown in Section~\ref{subsec:D'}, then condition $D'_q(U_n)^*$ from \cite{FFT13} also holds for all such points $\zeta$. Let $C_n^\kappa$ denote a rectangle, centered at $\zeta$, with its sides lined up with the stable and unstable direction, of length $s_n$ on the stable direction and $|\lambda|^{-\kappa q} s_n$ on the unstable direction. Observe that $U^{(\kappa)}(u_n)\subset C_n^{\kappa}$, which implies that 
$$m(U^{(\kappa)}(u_n))\leq |\lambda|^{-\kappa q} s_n^2$$
and consequently 
$$\lim_{n\to\infty}\sum_{\kappa\geq 1} m(U^{(\kappa)}(u_n)=\lim_{n\to\infty} \frac{1}{1-|\lambda|^{-q}}s_n^2=0.$$
So, at every periodic point $\zeta$  of prime period $q$, the respective REPP $N_n$ converges in distribution to a compound Poisson process with intensity $\vartheta$ given by \eqref{eq:EI-computed} and multiplicity d.f. $\pi$ given by \eqref{eq:multiplicity-dist}.

\bibliographystyle{abbrv}

\bibliography{cat-map}

\end{document}